\documentclass{amsart}

\usepackage{graphicx} 
\usepackage{slashbox}
\usepackage{tikz}
\usetikzlibrary{calc}
\usepackage{graphicx}
\usepackage{booktabs}
\usepackage{amssymb}
\usepackage{amsmath}
\usepackage{amsthm, amsfonts, mathrsfs}

\newtheorem{theorem}{Theorem}
\newtheorem{lemma}[theorem]{Lemma}
\newtheorem{cor}[theorem]{Corollary}

\newtheorem{ques}{Question}

\newtheorem{conj}{Conjecture}

\title{Exploring Homological Properties of Independent Complexes of Kneser Graphs}
\date{\today}
\author[Z. Feng]{Ziqin Feng}
\address{Department of Mathematics and Statistics\\Auburn University\\Auburn\\AL~36849\\USA}
\email{zzf0006@auburn.edu}

\author[G. Wang]{Guanghui Wang}
\address{School of Mathematics\\Shandong University\\Jinan\\China}
\email{ghwang@sdu.edu.cn}

\subjclass[2020]{05C69, 05E45, 55N31, 55U10}
\keywords{Independence Complex, Vietories-Rips Complex, Simplicial Complex, Homology, Connectivity, Kneser Graph, Projective Plane}

\begin{document}

\maketitle
\begin{abstract}
We discuss the topological properties of the independence complex of Kneser graphs, Ind(KG$(n, k))$, with $n\geq 3$ and $k\geq 1$. By identifying one kind of maximal simplices through projective planes, we obtain homology generators for the $6$-dimensional homology of the complex Ind(KG$(3, k))$. Using cross-polytopal generators, we provide lower bounds for the rank of $p$-dimensional homology of the complex Ind(KG$(n, k))$  where $p=1/2\cdot {2n+k\choose 2n}$.  

Let $\mathcal{F}_n^{[m]}$ be the collection of $n$-subsets of $[m]$ equipped with the symmetric difference metric. We prove that if $\ell$ is the minimal integer with the $q$th dimensional reduced homology $\tilde{H}_q(\mathcal{VR}(\mathcal{F}^{[\ell]}_n; 2(n-1)))$ being non-trivial, then $$\text{rank} (\tilde{H}_q(\mathcal{VR}(\mathcal{F}_n^{[m]}; 2(n-1)))\geq \sum_{i=\ell}^m{i-2\choose \ell-2}\cdot \text{rank} (\tilde{H}_q(\mathcal{VR}(\mathcal{F}_n^{[\ell]}; 2(n-1))). $$      Since the independence complex Ind(KG$(n, k))$ and the Vietoris-Rips complex $\mathcal{VR}(\mathcal{F}^{[2n+k]}_n; 2(n-1))$ are the same, we obtain a homology propagation result in the setting of independence complexes of Kneser graphs. Connectivity of these complexes is also discussed in this paper.   
    
\end{abstract}

\section{Introduction}
We denote $[m]$ to be the set of integers $\{1, 2, \ldots, m\}$ for each integer $m\geq 1$. Fix integers $n\geq 2$ and $k\geq 0$. The Kneser graph, KG$(n, k)$ is the graph with vertices being the collection of all $n$-subsets of $[2n+k]$ and any pair of vertices being adjacent if they have empty intersection; the stable Kneser graph, SG$(n, k)$ is the graph with vertices being the collection of all $n$-subsets of $[2n+k]$ not containing pairs $\{i, i+1\}$ or $\{1, 2n+k\}$ and any pair of vertices being adjacent if they have empty intersection. Kneser conjectured in 1955 that the chromatic number of KG$(n, k)$ is $k+2$.    

Lov\'{a}sz in \cite{LL78} proved that Kneser's conjecture holds using the Bosuk-Ulam theorem. Shrijver in \cite{SC78} proved that $\chi(\text{SG}(n, k))= \chi(\text{KG}(n, k))$, again using the Bosuk-Ulam theorem; moreover, he showed that if $G$ is  a subgraph of $\text{SG}(n, k)$ obtained by removing vertices, then  $\chi(G)<\chi(\text{SG}(n, k))$. This means that the stable Kneser graphs are vertex critical. These results and proof techniques used are one of the resources which lead to the work involving the interaction of combinatorics and algebraic topology.    

An independence complex is a construction from a graph which unveils the topological interplay among its independent sets. For any graph $G=(V, E)$, the independence complex of $G$, Ind$(G)$, is the simplicial complex with the simplices being the independent sets in the graph $G$. Barmak in \cite{Bar13} obtained a lower bound of the chromatic number of any graph using the topological property of the independence complex, in fact he proved that  $$\chi(G)\geq \text{Cat(Ind(}G))+1.$$
For any topological space $X$, the strong category number, Cat($X$),  is the minimum integer number $n$ such that there exists a CW-complex $Y$ homotopy equivalent to $X$ which can be covered by $n + 1$ contractible subcomplexes;  if such an $n$ doesn't exist, Cat($X$) is infinite. Lots of work has been done to understand the topology of independence complexes of graphs, for example \cite{EH06, BLN08, E08, B22}. A generalized version, $r$-independence complex, is introduced and studied in \cite{ADGRS23, DSS22}. 

  Barmak in \cite{Bar13} proved that Ind(KG($2, k$)) is homotopy equivalent to the wedge sum of ${k+3\choose 3}$-many copies of $S^2$. Computational results in Table~\ref{KG_3_homology} show that there are at least two non-trivial homologies in the complex, Ind(KG($3, k$)). Braun in \cite{BB09} proved that Ind(SG($2, k$)) is also a wedge sum of spheres when $k\geq 4$. The complex Ind(KG($n, 0$)) is a boundary of cross-polytope with ${2n\choose n}$-many vertices and hence it is homotopy equivalent to a sphere with dimension $\frac{1}{2}{2n\choose n}-1$. A detailed discussion of Ind(KG($n, 0$)) can be found in Section~\ref{max_cpgenerators}.  Much remains unknown regarding the topological properties of these independence complexes. The purpose of this paper is to investigate  the independence complexes of Kneser graph, Ind(KG($n, k$)), with $n\geq 3$ and $k\geq 1$. 
  
  There is a natural connection as discussed in \cite{FN24} between Ind(KG($n, k$)) and certain Vietoris-Rips complexes.  The Vietoris-Rips complex $\mathcal{VR}(X;r)$ of a metric space $(X,d)$ with scale $r\geq 0$ is a simplicial complex with vertex set $X$, where a nonempty finite subset $\sigma$ of $X$ is a simplex in $\mathcal{VR}(X;r)$ if and only if $d(x, y)\leq r$ for any pair $x, y\in \sigma$.   Along with the development of topological data analysis \cite{RG08, GC09}, it is very important to determine the topological properties of Vietoris-Rips complex of finite metric spaces in applied topology. In fact, the idea behind persistent homology is to compute the (co)homology of a Vietoris-Rips complex filtration built on data, which is typically a finite metric space in high dimensions (\cite{UB21}).
 Vietoris-Rips complexes were introduced by Vietoris in \cite{VI27} and then by Rips in \cite{EG87} to approximate a metric space at a chosen scale for different purposes.  A lot of attention has been drawn to study the homotopy types of Vietoris-Rips complexes of different metric spaces, for example, circles and ellipses (\cite{AA17, AAR19}), metric graphs (\cite{GGPSWWZ18}), geodesic spaces (\cite{ZV19, ZV20}), planar point sets (\cite{ACJS18}, \cite{CDEG10}), hypercube graphs (\cite{AA22, F23, FN24,  Shu22}), and more (\cite{MA13}, \cite{AAGGPS20}, \cite{AFK17}).

In this paper, we consider the following metric space. For $m\geq n$, let $S$ be a subset of $[m]$ with $n\leq |S|$. We define $\mathcal{F}_n^{S}$ to be a collection of $n$-subsets of $S$ equipped with a metric $d$ such that, for any $A$ and $B$ in $\mathcal{F}_n^{[m]}$, $d(A, B)=|A\Delta B|$, where $A\Delta B$ denotes the symmetric difference of $A$ and $B$, i.e., $(A\setminus B)\cup (B\setminus A)$. Notice as in Lemma~\ref{dist} that for any $A$ and $B$ in $\mathcal{F}_n^{[m]}$, $d(A, B)\leq 2(n-1)$ if and only if $A\cap B\neq \emptyset$; then, it is straightforward to verify that the independence  complex Ind(KG$(n, k))$ is same as the Vietoris-Rips complex $\mathcal{VR}(\mathcal{F}_n^{[2n+k]}; 2(n-1))$. In general, it is interesting to investigate the complex $\mathcal{VR}(\mathcal{F}_n^{[2n+k]}; r)$ with a general scale $r$. The results in \cite{FN24} show that the complex $\mathcal{VR}(\mathcal{F}_n^{[2n+k]}; 2)$ is a wedge sum of $S^2$'s for $n\geq 2$ and $k\geq 0$. 

We start with some preliminaries in Section~\ref{intro}. In Section~\ref{max_cpgenerators}, we discuss two important classes of maximal simplices (Lemma~\ref{max_pp} and Lemma~\ref{max_2n}) in the independence complex, Ind(KG($n, k))$. One of two classes is obtained through finite projective planes. When $n=3$, the maximal simplex from the projective plane of order $2$, namely the Fano plane, can be naturally extended as a cross-polytopal homology generator which gives a non-trivial homology in the corresponding dimension (Theorem~\ref{n3_nontrivial_hom}). Using the cross-polytopal generator induced by another class of maximal simplices, we identify a lower bound for the rank of $p$-dimensional homology of Ind(KG($n, k))$ with $p=\frac{1}{2}\cdot {2n\choose n}-1$ for general $n$ (Theorem~\ref{low_b_bigdim}). The lower bounds obtained match the computed results of $9$-dimensional homology in Table~\ref{KG_3_homology} for $n=3$ and $k=1, 2, 3$.  In Section~\ref{centration}, we introduce the concentration maps and prove their properties (Lemma~\ref{phi_m_S} and Lemma~\ref{phi_m_m-1}) which allow us to build the homology propagation results (Theorem~\ref{low_b_smalldim}) in Section~\ref{homology_p}. The connectivity of the independence complex is discussed (Theorem~\ref{connectivity}) in Section~\ref{connect} and we conclude with a list of open questions in Section~\ref{open_p}.    

 \begin{table}
 \caption{Ranks of non-trivial homologies in the independence complex of the Kneser graph KG($3, k$) computed through Auburn University Easley Cluster.}
\label{KG_3_homology}
\begin{tabular}{ |c|p{0.8cm}|p{0.8cm}|p{0.8cm}|p{0.8cm}| } 
 \hline
 \backslashbox[2mm]{homology}{$k$} & $0$ & $1$ & $2$& $3$   \\ 
  \hline
 $6$th-dim & $0$ &  29  & 233 & 1,052 \\
 \hline
 $9$th-dim & $1$ & $7$ &  $28$  & $84$  \\
 \hline
 \end{tabular}
\end{table}

\section{Notation and Preliminaries}\label{intro}

\textbf{Topological spaces.} Let $X$ and $Y$ be topological spaces. We write $X\simeq Y$ when they are homotopy equivalent. 
We denote $\Sigma X$ to be the suspension of $X$. For any sphere $S^d$, the suspension $\Sigma S^d$ is homeomorphic to $S^{d+1}$. 

Any two metric spaces $(X, d_X)$ and $(Y, d_Y)$ are said to be isometric if there is a bijective distance-preserving map $f$ from $X$ to $Y$, i.e., $d_X(x_1, x_2)=d_Y(f(x_1), f(x_2))$ for any $x_1, x_2\in X$. Hence if $X$ and $Y$ are isometric, then it is straightforward to verify that  $\mathcal{VR}(X, r)$ is homeomorphic to $\mathcal{VR}(Y, r)$ for any $r\geq 0$.

A cross-polytope with $2d$ vertices is a regular, convex polytope that exists in $d$-dimensional Euclidean space. So it homeomorphic to the unit ball in $\mathbb{R}^d$ whose boundary is homeomorphic to $S^{d-1}$.

\medskip

\textbf{Simplicial complexes.} A simplicial complex $K$ on a vertex set $V$ is a collection of non-empty subsets of $V$ such that: i) all singletons are in $K$; and ii) if $\sigma\in K$ and $\tau\subset \sigma$, then $\tau\in K$. For a complex $K$, we use $K^{(k)}$ to represent the $k$-skeleton of $K$, which is a subcomplex of $K$. For vertices $v_1, v_2, \ldots, v_k$ in a complex $K$,  if they span a simplex in $K$, then we denote the simplex to be $\{v_1, v_2, \ldots, v_k\}$. If $\sigma$ and $\tau$ are simplices in $K$ with $\sigma\subset \tau$, we say $\sigma$ is a face of $\tau$.  We say a simplex is a maximal simplex (or a facet) if it is not a face of any other simplex.

A complex $K$ is \emph{a clique complex} if  the following condition holds: a non-empty subset $\sigma$ of vertices is in $K$ given that the edge $\{v, w\}$ is in  $K$ for any pair $v, w\in \sigma$. For any graph $G=(V, E)$, we denote Cl$(G)$ to be the clique complex of $G$ whose vertex set is $V$ and Cl$(G)$ contains a finite subset $\sigma\subset V$ as a simplex if each pair of vertices in $\sigma$ forms an edge in $G$. Also, we see that the Vietoris-Rips complex over any metric space is a clique complex by the definition.

Let $L$ be a complex and $v$ be a vertex not in $L$. The cone over $L$ with the vertex $v$, denoted by $v\ast L$, is a simplical complex defined on the vertex set $L^{(0)}\cup \{v\}$ such that a simplex of $v\ast L$ is either a simplex in $L$ or a simplex in $L$ adjoined with $v$. Notice that any cone is contractible.

For any vertex $v$ in a complex $K$, $K\setminus v$ denote the induced complex on the vertex set $K^{(0)}\setminus \{v\}$. The star of a vertex $v$ in $K$ is st$_K(v)=\{\sigma: \sigma\cup \{v\}\in K\}$. Hence for any $v\in V$, st$_K(v)$ is contractible because it is a cone over $\text{lk}_K(v)$ with the vertex $v$, namely $v\ast \text{lk}_K(v)$, where $\text{lk}_K(v)=\{\sigma: \sigma\cup \{v\}\in K\text{ and }v\notin \sigma\}$.  The next lemma gives  an important method to investigate the homotopy type of a complex by splitting it into a vertex and its complement. A proof could be found in \cite{FN24}. 

\begin{lemma} \label{complex_add_1v}If $v$ is a vertex in $K$ with the inclusion map $\imath: \text{lk}_K(v)\rightarrow K$ being null-homotopic, then $K$ is homotopic to $K\setminus v\vee \Sigma (\text{lk}_K(v))$. \end{lemma}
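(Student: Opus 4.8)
The plan is to exhibit $K$ as a homotopy pushout and then collapse a contractible piece. Write $K=(K\setminus v)\cup \text{st}_K(v)$. Both pieces are subcomplexes of $K$, and a simplex lies in their intersection precisely when it lies in $\text{st}_K(v)$ and does not contain $v$, so $(K\setminus v)\cap \text{st}_K(v)=\text{lk}_K(v)$. Since the inclusions $\text{lk}_K(v)\hookrightarrow K\setminus v$ and $\text{lk}_K(v)\hookrightarrow \text{st}_K(v)$ are inclusions of subcomplexes, hence cofibrations, $K$ is the homotopy pushout of the span $(K\setminus v)\hookleftarrow \text{lk}_K(v)\hookrightarrow \text{st}_K(v)$.

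Next I would use that $\text{st}_K(v)=v\ast \text{lk}_K(v)$ is a cone, hence contractible, and moreover that the pair $(\text{st}_K(v),\text{lk}_K(v))$ is homotopy equivalent to the cone pair $(C\,\text{lk}_K(v),\text{lk}_K(v))$. Substituting the contractible cone into the homotopy pushout gives $K\simeq (K\setminus v)\cup_{\text{lk}_K(v)} C\,\text{lk}_K(v)$, which is exactly the mapping cone $C_\imath$ of the inclusion $\imath$ of $\text{lk}_K(v)$ into $K\setminus v$.

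The last step is the classical fact that the mapping cone of a null-homotopic map splits off a suspension: if $f\colon A\to B$ is homotopic to a constant map, then because $A\hookrightarrow CA$ is a cofibration the mapping cones of $f$ and of the constant map are homotopy equivalent, and the latter is $B\vee (C\,A/A)=B\vee \Sigma A$. Applying this with $A=\text{lk}_K(v)$, $B=K\setminus v$ and $f=\imath$, which is null-homotopic by hypothesis, yields $K\simeq (K\setminus v)\vee \Sigma(\text{lk}_K(v))$.

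The only place that needs care is this final step, where one must track basepoints: forming the wedge and identifying $C\,A/A$ with the reduced suspension $\Sigma A$ requires $\text{lk}_K(v)$ to be well-pointed, which is automatic for a non-empty simplicial complex, while the degenerate case $\text{lk}_K(v)=\emptyset$ (an isolated vertex $v$) is handled directly with the convention $\Sigma\emptyset=S^0$. Apart from these bookkeeping points the argument is purely formal manipulation of homotopy pushouts, so I do not anticipate a substantive obstacle; all of the content of the lemma sits in the null-homotopy hypothesis, which is what one verifies in applications via an explicit contraction inside $K\setminus v$.
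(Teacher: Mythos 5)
Your architecture is the standard one (and the paper gives no proof of its own, only a citation to \cite{FN24}): write $K=(K\setminus v)\cup \mathrm{st}_K(v)$ with intersection $\mathrm{lk}_K(v)$, observe $\mathrm{st}_K(v)=v\ast \mathrm{lk}_K(v)$ is a cone, conclude $K\simeq (K\setminus v)\cup_{\mathrm{lk}_K(v)}C\,\mathrm{lk}_K(v)$, i.e.\ the mapping cone of the inclusion $j\colon \mathrm{lk}_K(v)\hookrightarrow K\setminus v$, and then split the mapping cone of a null-homotopic map. All of that is fine, including the well-pointedness and $\mathrm{lk}_K(v)=\emptyset$ remarks.

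The gap is in the final invocation ``$f=\imath$, which is null-homotopic by hypothesis.'' The map whose null-homotopy the splitting step needs is $j\colon \mathrm{lk}_K(v)\to K\setminus v$, not $\imath\colon \mathrm{lk}_K(v)\to K$: the cone is attached along $K\setminus v$, so only a null-homotopy carried out inside $K\setminus v$ lets you replace $j$ by a constant map. A null-homotopy of $\imath$ in $K$ gives nothing, and in fact $\imath$ is \emph{always} null-homotopic, since $\mathrm{lk}_K(v)\subset \mathrm{st}_K(v)$ and the star is contractible; so read literally the hypothesis is vacuous, and the conclusion is then false in general. Concretely, for the $6$-vertex triangulation of $\mathbb{R}P^2$ and any vertex $v$, $K\setminus v$ is a M\"obius band $\simeq S^1$ and $\mathrm{lk}_K(v)$ is a $5$-cycle $\simeq S^1$, yet $\mathbb{R}P^2\not\simeq S^1\vee S^2$ (compare $H_1$). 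So your proof actually establishes the corrected statement with hypothesis ``$\mathrm{lk}_K(v)\hookrightarrow K\setminus v$ is null-homotopic,'' which is the form the rest of the paper really uses (in Lemmas~\ref{phi_m_S} and~\ref{phi_m_m-1} the link is shown to be a cone, hence the stronger hypothesis holds); as written, though, the last step does not follow from the stated hypothesis, and you should either strengthen the hypothesis or explain why, in your situation, null-homotopy in $K$ upgrades to null-homotopy in $K\setminus v$ (it does not in general).
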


The following lemma is straightforward to prove and it is convenient to use in discussing the relation between Vietoris-Rips complexes and independence complexes. 

\begin{lemma}\label{dist} Let $A$ and $B$ be $n$-subsets of $[m]$. For any integer $c = 1, 2, \ldots,  n$, $d(A, B)\leq 2c$ if and only if $|A\cap B| \geq n-c$. Specifically $d(A, B)\leq 2(n-1)$ if and only if $A\cap B\neq \emptyset$. \end{lemma}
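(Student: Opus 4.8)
The plan is to reduce both assertions to the single identity $d(A,B) = 2\bigl(n - |A\cap B|\bigr)$, after which everything is elementary arithmetic. First I would record that since $|A| = |B| = n$, inclusion–exclusion gives $|A\setminus B| = |A| - |A\cap B| = n - |A\cap B|$ and, by the symmetric computation, $|B\setminus A| = n - |A\cap B|$. The sets $A\setminus B$ and $B\setminus A$ are disjoint, so their cardinalities add without overcounting, yielding
\[
d(A,B) = |A\Delta B| = |A\setminus B| + |B\setminus A| = 2\bigl(n - |A\cap B|\bigr).
\]

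With this identity in hand, the first statement is a rearrangement: for $c \in \{1,\dots,n\}$ we have $d(A,B)\le 2c$ if and only if $2\bigl(n-|A\cap B|\bigr)\le 2c$, equivalently $n - |A\cap B| \le c$, equivalently $|A\cap B| \ge n-c$. The hypothesis $1\le c\le n$ is only needed to keep the bound $n-c$ in the meaningful range $\{0,1,\dots,n-1\}$; the displayed identity itself is valid for all $A,B$. The ``specifically'' clause is then just the instance $c = n-1$: here $|A\cap B| \ge n-(n-1) = 1$, which says precisely that $A\cap B\ne\emptyset$.

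There is no real obstacle in this argument; it is a routine cardinality count. The only points deserving a moment's attention are the disjointness of $A\setminus B$ and $B\setminus A$, and the fact that the common size $|A|=|B|=n$ is invoked once for each difference set — these are exactly the features that make the coefficient $2$ appear and make the two inequalities equivalent.
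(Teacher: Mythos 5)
Your proof is correct and is the natural argument the paper has in mind when it calls the lemma ``straightforward to prove'' without giving details: you derive the identity $d(A,B)=2(n-|A\cap B|)$ from the equal cardinalities and the disjointness of $A\setminus B$ and $B\setminus A$, and then both assertions follow by rearranging the inequality.
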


\textbf{Cross-polytopal homology generators.} To identify generators of the homology of general simplicial complexes, we consider the clique complex of a specific class of graphs. Let $G=(V, E)$ be the graph with vertex set $V=\{v_1, v_2, \ldots, v_{2d}\}$ and $\{v_i, v_j\}\in E$ if and only if $|j-i|\neq d$. Let Cl$(G)$ be the clique complex of $G$ consisting all the clique subsets of $V$, i.e. pairwise connected subsets of $V$.  Then the complex Cl$(G)$ is the boundary of a cross-polytope with $2d$ vertices which is homotopy to $S^{d-1}$. And this complex has a $(d-1)$-dimensional cycle $\gamma$ which generates $H_{d-1}(\text{Cl}(G))$. We say this kind of complex to be \emph{cross-polytopal}. Let $\mathcal{A}$ be the collection of maximal antipode-free simplices as $$\mathcal{A}=\{\sigma\subset V: v_i\in \sigma \text{ iff } v_j \notin \sigma \text{ for } \{v_i, v_j\} \in E \}.$$ 
Then the cycle $\gamma$ is the sum of properly oriented elements in the collection $\mathcal{A}$.

Next we aim to identify two sufficient conditions for a subcomplex $L$ of $K$ under which the inclusion map from $L$ to $K$ induces an injective homology homomorphism.  The next lemma is proved by Adams and Virk in \cite{AV24}. 

\begin{lemma}\label{same_coef_max} Suppose $K$ is a simplicial complex and $\sigma$ is a maximal simplex of dimension $p$ in $K$. If there is a $p$-cycle $\alpha$ in $K$ in which $\sigma$ appears with a non-trivial coefficient $\lambda$, then any representative $p$-cycle of $[\alpha]$ contains $\sigma$ with the same coefficient $\lambda$. \end{lemma}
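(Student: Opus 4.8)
The plan is to compare any two representative cycles directly via the boundary that connects them, and to exploit the defining property of a maximal simplex. Fix a coefficient ring $R$ and orient each simplex of $K$, so that simplicial chains are written as $R$-linear combinations of oriented simplices; for a $p$-chain $c$ and a $p$-simplex $\tau$, write $\langle c, \tau\rangle \in R$ for the coefficient of $\tau$ in $c$. If $\alpha'$ is any $p$-cycle with $[\alpha'] = [\alpha]$ in $H_p(K)$, then by definition there is a $(p+1)$-chain $\beta \in C_{p+1}(K)$ with $\alpha - \alpha' = \partial \beta$. It therefore suffices to show $\langle \partial \beta, \sigma\rangle = 0$, for then $\lambda = \langle \alpha, \sigma\rangle = \langle \alpha', \sigma\rangle + \langle \partial\beta, \sigma\rangle = \langle \alpha', \sigma\rangle$.

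The key step is to observe that $\sigma$ cannot appear in the boundary of any $(p+1)$-chain of $K$. Write $\beta = \sum_\tau c_\tau \tau$, where $\tau$ ranges over the $(p+1)$-simplices of $K$. In the standard simplicial boundary formula, the $p$-simplices occurring in $\partial\tau$ are precisely the codimension-one faces of $\tau$; in particular, a given $p$-simplex $\sigma$ occurs in $\partial\tau$ only if $\sigma\subset\tau$. But $\sigma$ is a maximal simplex of $K$, hence it is a face of no simplex of $K$ of dimension greater than $p$, and in particular no $(p+1)$-simplex $\tau$ of $K$ satisfies $\sigma\subset\tau$. Therefore $\langle\partial\tau,\sigma\rangle = 0$ for every $(p+1)$-simplex $\tau$ of $K$, and by linearity $\langle\partial\beta,\sigma\rangle = \sum_\tau c_\tau\langle\partial\tau,\sigma\rangle = 0$.

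Combining the two observations yields $\langle\alpha',\sigma\rangle = \lambda$, which is the claim. I do not anticipate a genuine obstacle here: the argument is essentially the remark that the image of $\partial\colon C_{p+1}(K) \to C_p(K)$ avoids the line spanned by any maximal $p$-simplex, so the coefficient of such a simplex is a homological invariant of the class rather than of the representative. The only points requiring care are to deploy the maximality hypothesis in exactly the form ``no $(p+1)$-simplex of $K$ has $\sigma$ as a facet'' and to keep the coefficient bookkeeping over the same ring $R$ in which $\lambda$ is assumed non-trivial.
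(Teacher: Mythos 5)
Your argument is correct. The paper does not include its own proof of this lemma --- it simply cites Adams and Virk \cite{AV24} --- so there is nothing internal to compare against, but your reasoning is exactly the standard (and essentially only) argument: since $\sigma$ is maximal, no $(p+1)$-simplex of $K$ has $\sigma$ as a face, hence $\sigma$ cannot occur in $\partial\beta$ for any $(p+1)$-chain $\beta$, and the coefficient of $\sigma$ is therefore an invariant of the homology class. The bookkeeping with $\langle\cdot,\cdot\rangle$ and the reduction to $\langle\partial\beta,\sigma\rangle=0$ are clean and complete.
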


Using the result above, we obtain the first sufficient condition for the inclusion map of a subcomplex $L$ of $K$ to induce an injective homology homomorphism. 

\begin{lemma}\label{cross_p_injective} Let $L$ be a cross-polytopal subcomplex of the complex $K$ with $2d$ vertices and $\mathcal{A}$ be the collection of maximal antipode-free simplices in the subcomplex $L$.

If there exists an element in $\mathcal{A}$ that is a maximal simplex in $K$, then the inclusion map $\imath: L\rightarrow K$ induces an injective map in the $(d-1)$-dimensional homology and hence the $(d-1)$-dimensional homology of $K$ is non-trivial.  \end{lemma}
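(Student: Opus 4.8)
The plan is to derive the statement directly from Lemma~\ref{same_coef_max}. Recall from the setup that, since $L$ is cross-polytopal on $2d$ vertices, it is homotopy equivalent to $S^{d-1}$, so $H_{d-1}(L)$ is infinite cyclic; a generator is the class $[\gamma]$ of the cycle $\gamma$, which is the signed sum of the maximal antipode-free simplices in $\mathcal{A}$. In particular every element of $\mathcal{A}$ is a $(d-1)$-simplex and occurs in $\gamma$ with coefficient $\pm 1$. Because $L$ is a subcomplex of $K$, the inclusion-induced chain map sends $\gamma$ to a $(d-1)$-cycle of $K$, which we again call $\gamma$; its class $\imath_*[\gamma]$ generates the image of $\imath_*\colon H_{d-1}(L)\to H_{d-1}(K)$.

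First I would fix an element $\sigma\in\mathcal{A}$ which, by hypothesis, is a maximal simplex of $K$; write $p=d-1=\dim\sigma$ and let $\lambda=\pm1$ be the (nonzero) coefficient with which $\sigma$ appears in $\gamma$. Since $H_{d-1}(L)$ is infinite cyclic with generator $[\gamma]$, the map $\imath_*$ is injective precisely when $\imath_*[\gamma]$ has infinite order in $H_{d-1}(K)$, i.e.\ when $m\cdot\imath_*[\gamma]\neq 0$ for every nonzero integer $m$. The key observation is that for each such $m$ the chain $m\gamma$ is a $p$-cycle of $K$ representing $m\cdot\imath_*[\gamma]$ in which the maximal simplex $\sigma$ occurs with coefficient $m\lambda\neq 0$. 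Applying Lemma~\ref{same_coef_max} to $K$, the maximal simplex $\sigma$, and the $p$-cycle $m\gamma$, every representative $p$-cycle of $m\cdot\imath_*[\gamma]$ must contain $\sigma$ with coefficient $m\lambda$.

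Finally I would observe that if $m\cdot\imath_*[\gamma]=0$ then the zero cycle would be a representative of this class, and it contains $\sigma$ with coefficient $0\neq m\lambda$, contradicting the previous step. Hence $m\cdot\imath_*[\gamma]\neq 0$ for all $m\neq 0$, so $\imath_*$ is injective, and since $H_{d-1}(L)\neq 0$ this also shows $H_{d-1}(K)\neq 0$. The argument is short once Lemma~\ref{same_coef_max} is available; the only point requiring care is that injectivity — rather than merely a nonzero image — is obtained by running the coefficient-tracking argument with the scaled cycles $m\gamma$, and this is exactly where one uses that $\sigma$ is maximal in all of $K$ and not only in $L$. (If one instead works with field coefficients, $H_{d-1}(L)$ is one-dimensional and it suffices to take $m=1$.)
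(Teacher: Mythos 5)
Your proof is correct and takes essentially the same approach as the paper: both proceed by contradiction, scale the cross-polytopal cycle $\gamma$ by a nonzero integer, invoke Lemma~\ref{same_coef_max} to pin down the coefficient of the maximal simplex $\sigma$ in any representative, and then derive the contradiction from $\sigma$'s maximality in $K$. The only cosmetic difference is in the final step: the paper observes that the null class would have a representative that is a boundary of a $d$-chain, which is impossible since $\sigma$ is a face of no $d$-simplex, whereas you compare directly against the zero cycle as a representative; the two phrasings are equivalent.
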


\begin{proof} Choose a $(d-1)$-cycle $\gamma$ which is a sum of properly oriented elements in $\mathcal{A}$. Let $\sigma$ be a simplex in $\mathcal{A}$ which is a maximal simplex in $K$. If the map $\imath_\ast$ induced by the inclusion map $\imath$ is not injective, then there is a non-zero number $\lambda$ such that $\imath_\ast(\lambda\cdot[\gamma])=0$. By Lemma~\ref{same_coef_max}, the coefficient of $\sigma$ in  any representative of $[\lambda\cdot \gamma]$  is $\lambda$. Then there is a $d$-chain in $K$ whose boundary is a representative of $[\lambda\cdot \gamma]$. This is impossible since $\sigma$ is maximal in $K$. \end{proof}

The next result is proved by Virk in \cite{Virk22} which states that if a subcomplex $L$ to be a contraction of the complex $K$, then the inclusion map induces injective homology homomorphisms. Recall that a map $f:X\rightarrow Y$ from a metric space $(X, d)$ onto a closed subspace $Y\subseteq X$ is a \emph{contraction} if $f|_Y = \text{id}_Y$ and $d(f(x), f(y))\leq d(x, y)$ for all $x, y\in X$. It is straightforward to verify that if $f$ is a contraction from $X$ to $Y$ and $g$ is a contraction from $Y$ to $Z$ with $Y$ being a closed subspace of $X$ and $Z$ being a closed subspace of $Y$, then $g\circ f$ is a contraction from $X$ to $Z$. In Section~\ref{centration}, we define contraction map from $\mathcal{F}^{[m]}_n$ to $\mathcal{F}^{S}_n$ with $S\subseteq [m]$ and this map allows us to develop homology propagation results using the following lemma in the setting of $\mathcal{F}^{[m]}_n$.

\begin{lemma}\label{contraction_ind_inj} If $f:X\rightarrow Y$ is a contraction, then the inclusion map $Y\rightarrow X$ induces injections on  homology $H_q(\mathcal{VR}(Y; r))\rightarrow H_q(\mathcal{VR}(X; r))$ for all integers $q\geq 0$ and scales $r\geq 0$. 
\end{lemma}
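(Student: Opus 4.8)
The plan is to use nothing more than the functoriality of the Vietoris--Rips construction together with the set-level identity $f\circ \imath=\mathrm{id}_Y$, where $\imath\colon Y\to X$ denotes the inclusion. First I would record that a contraction $f\colon X\to Y$ is in particular $1$-Lipschitz, so that for every scale $r\ge 0$ it induces a simplicial map $f_\#\colon \mathcal{VR}(X;r)\to \mathcal{VR}(Y;r)$: if $\sigma=\{x_0,\dots,x_q\}$ is a simplex of $\mathcal{VR}(X;r)$ then $d(f(x_i),f(x_j))\le d(x_i,x_j)\le r$ for all $i,j$, so $\{f(x_0),\dots,f(x_q)\}$ spans a simplex of $\mathcal{VR}(Y;r)$ (of dimension at most $q$, since $f$ need not be injective). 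Likewise the inclusion $\imath$ is distance-preserving, hence $1$-Lipschitz, and induces a simplicial map $\imath_\#\colon \mathcal{VR}(Y;r)\to \mathcal{VR}(X;r)$ for every $r$.

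Next I would observe that because $f|_Y=\mathrm{id}_Y$, the composite $f\circ\imath\colon Y\to Y$ is the identity on the vertex set of $\mathcal{VR}(Y;r)$. Since a simplicial map is determined by its values on vertices, $f_\#\circ \imath_\#$ is literally the identity simplicial map of $\mathcal{VR}(Y;r)$, not merely a map homotopic to it. Applying the $q$-th simplicial homology functor then gives $f_*\circ \imath_*=\mathrm{id}$ on $H_q(\mathcal{VR}(Y;r))$ for every $q\ge 0$ and every $r\ge 0$. Hence $\imath_*$ admits a left inverse and is therefore injective, which is exactly the assertion of the lemma.

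There is essentially no hard step here: the content is the soft fact that $1$-Lipschitz maps induce simplicial maps of Vietoris--Rips complexes and that these assignments are functorial, so the ``main obstacle'' is really just bookkeeping. The one point that warrants a word of care is that $f$ may collapse a simplex onto one of strictly smaller dimension; the induced chain map then sends such a simplex to $0$, which is the standard convention for a simplicial map of abstract simplicial complexes and does not affect the argument. (The requirement that $Y$ be a closed subspace of $X$, present in the metric definition of a contraction, plays no role in this homological statement and is in any case automatic for the finite metric spaces $\mathcal{F}_n^{[m]}$ relevant to this paper.)
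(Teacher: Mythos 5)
Your proof is correct. The paper does not give its own proof of this lemma but simply cites Virk \cite{Virk22}; your argument --- that a contraction $f$ is $1$-Lipschitz and so induces a simplicial map $f_\#$ with $f_\# \circ \imath_\# = \mathrm{id}$ on $\mathcal{VR}(Y;r)$, whence $\imath_*$ is a split injection on homology --- is exactly the standard retraction/functoriality argument underlying Virk's result, and your remark about collapsed simplices mapping to $0$ correctly addresses the only subtlety.
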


\section{Maximal simplices and Cross-polytopal generators}\label{max_cpgenerators}

In this section, we seek the cross-polytopal subcomplexes in the complex Ind($KG(n, k))$ which contain a maximal simplex and these subcomplexes hence yields non-trivial homologies in Ind($KG(n, k))$, together with lower bounds for these homologies. We start with discussing the maximal simplices in  Ind($KG(n, k))$.

\textbf{Maximal simplices in Ind($KG(n, k))$.}  As we discussed in the introduction, $$\text{Ind}(KG(n, k))=\mathcal{VR}(\mathcal{F}_n^{[2n+k]}; 2c)$$ with $c= n-1$. So instead we investigate some maximal simplices in $\mathcal{VR}(\mathcal{F}_n^{[m]}; 2c)$ with $m\geq 2n$ and $c=n-1$. One obvious maximal simplex in such complex is $\{A: i_0\in A \text{ and }A\in \mathcal{F}_n^{[m]}\}$ for some $i_0\in [m]$. There are two kinds of  maximal simplices playing crucial roles in determining the generators of the non-trivial homology of these complexes. Next we'll give their definitions and proofs. We start with one obvious kind of maximal simplices.  

\begin{lemma} Let $S$ be a subset of $[m]$ with size $2n-1$. Then $\mathcal{F}_n^{S}$ forms a maximal face in $\mathcal{VR}(\mathcal{F}_n^{[m]};2c)$ with $c=n-1$. \end{lemma}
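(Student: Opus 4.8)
The plan is to check the two defining properties of a maximal face of a clique complex directly from the combinatorics of intersections, translating the scale condition $d(A,B)\le 2(n-1)$ into $A\cap B\neq\emptyset$ via Lemma~\ref{dist}. Throughout, recall that an $n$-subset $C$ of $[m]$ is a vertex of $\mathcal{F}_n^{S}$ precisely when $C\subseteq S$.

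First I would show that $\mathcal{F}_n^{S}$ spans a simplex at all. For any two $n$-subsets $A,B$ of $S$ we have $|A|+|B|=2n>2n-1=|S|$, so by inclusion–exclusion $|A\cap B|\ge |A|+|B|-|S|\ge 1$, whence $A\cap B\neq\emptyset$ and $d(A,B)\le 2(n-1)$ by Lemma~\ref{dist}. Since $\mathcal{VR}(\mathcal{F}_n^{[m]};2(n-1))$ is a clique complex, the pairwise-adjacent vertex set $\mathcal{F}_n^{S}$ is a simplex.

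Next I would establish maximality by contradiction: suppose $\mathcal{F}_n^{S}\cup\{C\}$ is a simplex for some $n$-subset $C$ of $[m]$ with $C\notin\mathcal{F}_n^{S}$, i.e.\ $C\not\subseteq S$. Pick $x\in C\setminus S$; then $|C\cap S|\le |C|-1=n-1$, so $|S\setminus C|\ge (2n-1)-(n-1)=n$. Choose any $n$-subset $A\subseteq S\setminus C$. Then $A\in\mathcal{F}_n^{S}$ but $A\cap C=\emptyset$, so $d(A,C)=2n>2(n-1)$, contradicting that $A$ and $C$ lie in a common simplex. Hence no such $C$ exists, and $\mathcal{F}_n^{S}$ is a maximal face.

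I expect no genuine obstacle here: each direction is a single pigeonhole estimate. The only points requiring care are the identifications "$C$ a vertex of $\mathcal{F}_n^{S}$ $\iff$ $C\subseteq S$" and the size bound $|S\setminus C|\ge n$, which is exactly what guarantees room to select the disjoint witness $A$ inside $S$; the hypothesis $|S|=2n-1$ is used at both ends (in one direction to force every pair in $\mathcal{F}_n^{S}$ to meet, in the other to leave an $n$-set avoiding any outside $C$).
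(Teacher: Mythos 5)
Your proof is correct and matches the paper's own (sketched) argument essentially verbatim: pigeonhole on $|A|+|B|>|S|$ for the simplex property, and selecting an $n$-subset of $S\setminus C$ as a disjoint witness for maximality. No further comment is needed.
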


The proof is straightforward if we notice that: i) any pair of vertices in $\mathcal{F}_n^{S}$ has non-empty intersection and hence their distance is $\leq 2(n-1)$ by Lemma~\ref{dist}; ii) if $A$ is an $n$-sized subset of $[m]$ and $A\setminus S\neq \emptyset$, then $|S\setminus A|\geq n$ and pick an $n$-subset $B$ of $S\setminus A$ which has empty intersection with $A$, hence $d(A, B)>2(n-1)$. 

There is one kind of maximal simplex in $\mathcal{VR}(\mathcal{F}_n^{[m]}; 2(n-1))$ that is related to projective planes. A projective plane of order $n$ where $n \geq  2$ is a finite set of points and lines (defined as sets of points), such that:

\begin{itemize}

\item[i)] every line contains $n + 1 $ points;
\item[ii)] every point lies on $n + 1$ lines;

\item[iii)] any two distinct lines intersect in a unique point;

\item[iv)] any two distinct points lie on a unique line. 
\end{itemize}

It is showed (Lemma 4 in \cite{XP16}) that any projective plane of order $n$ has $(n^2+n+1)$-many points and lines, which is a Steiner system $S(2, n+1, n^2+n+1)$ considering the lines to be blocks. Any finite projective plane can also be considered a uniform hypergraph with edges to be the lines in the projective plane. For example, the Fano plane is a projective plane of order $2$, which is also a $3$-regular and $3$-uniform hypergraph with $7$ vertices and $7$ edges.  One hypergraph representation of Fano plane (see the picture below) is $(V, E)$ with $V=[7]$ and $$E=\{\{1, 2, 3\}, \{1, 4, 5\}, \{1, 6, 7\}, \{2, 4, 6\}, \{2, 5, 7\}, \{3, 4, 7\}, \{3, 5, 6\}\}.$$
\begin{center}
\begin{tikzpicture}
\tikzstyle{point}=[ball color=blue, circle, draw=black, inner sep=0.07cm]
\node[label=right: 1] (v7) at (0,0) [point] {};
\draw (0,0) circle (1cm);
\node[label=above: 5] (v1) at (90:2cm) [point] {};
\node[label=below: 2] (v2) at (210:2cm) [point] {};
\node[label=below: 6] (v4) at (330:2cm) [point] {};
\node[label=left: 7] (v3) at (150:1cm) [point] {};
\node[label=below: 4] (v6) at (270:1cm) [point] {};
\node[label=right: 3] (v5) at (30:1cm) [point] {};
\draw (v1) -- (v3) -- (v2);
\draw (v2) -- (v6) -- (v4);
\draw (v4) -- (v5) -- (v1);
\draw (v3) -- (v7) -- (v4);
\draw (v5) -- (v7) -- (v2);
\draw (v6) -- (v7) -- (v1);
\node at (v7) {1};
\end{tikzpicture}
\end{center}
It is known that a projective plane of order $n$ exists if $n$ is a prime power. Also, a projective plane of order $6$ or $10$ doesn't exists (see \cite{CL91}). It is an open question for the existence of projective planes of other orders. 

A blocking set of a projective plane is a subset of points which meets all the lines but doesn't contain any line. It is proved in \cite{BS78} that any blocking set of a projective plane with order $n$ has at least size $k$ such that 
$$k\geq n+\sqrt{n}+1.$$


Next we show that if the projective plane exists at order $n$ with points being the set $[n^2+n+1]$, then the collection of lines in this projective plane forms a maximal simplex in the complex $\mathcal{VR}(\mathcal{F}_{n+1}^{[m]}; 2n)$ with $m\geq n^2+n+1$. 

\begin{lemma}\label{max_pp}  Let $n$ be an integer such that the projective plane of order $n$ exists and $m\geq n^2+n+1$. Let $G=(V, E)$ be an $(n+1)$-uniform hypergraph with $V=[n^2+n+1]$ and $E\subset \mathcal{F}^V_{n+1}$ which forms a projective plane of order $n$. 

Then the collection $E$ forms a maximal simplex in the complex $\mathcal{VR}(\mathcal{F}^{[m]}_{n+1}, 2n)$. \end{lemma}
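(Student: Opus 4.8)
The plan is to verify the two defining properties of a maximal simplex: first that $E$ actually spans a simplex in $\mathcal{VR}(\mathcal{F}^{[m]}_{n+1};2n)$, and second that no vertex of $\mathcal{F}^{[m]}_{n+1}$ outside $E$ can be added to it. For the first part, take any two distinct lines $L, L' \in E$. Each has exactly $n+1$ points, and by property (iii) of a projective plane they intersect in a unique point, so $|L \cap L'| = 1 = (n+1) - n$. By Lemma~\ref{dist} (applied with $n$ replaced by $n+1$ and $c = n$), this gives $d(L, L') \leq 2n$. Hence every pair in $E$ is within scale $2n$, and since $\mathcal{VR}$ is a clique complex, $E$ is a simplex.

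For maximality, I would argue by contradiction. Suppose $A \in \mathcal{F}^{[m]}_{n+1}$ with $A \notin E$ and $E \cup \{A\}$ still a simplex. There are two cases. If $A \not\subseteq [n^2+n+1]$, pick a point $x \in A \setminus [n^2+n+1]$; then $x$ lies on no line of the projective plane. I want a line $L \in E$ with $|A \cap L| = 0$, which by Lemma~\ref{dist} forces $d(A,L) > 2n$, contradicting that $E \cup \{A\}$ is a simplex. The subtle case is $A \subseteq [n^2+n+1]$: here $A$ is an $(n+1)$-subset of the point set that is not itself a line, and I must again produce a line $L$ disjoint from $A$. Equivalently, I must show that an $(n+1)$-subset of the points that meets every line must be a line. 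This is exactly where the blocking-set bound enters: a set meeting every line but containing no line is a blocking set, and by the result of \cite{BS78} any blocking set has size at least $n + \sqrt{n} + 1 > n+1$. So an $(n+1)$-set meeting every line must contain a line, and since it has exactly $n+1$ points it must equal that line — contradicting $A \notin E$. Therefore no such $A$ exists and $E$ is maximal.

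I expect the main obstacle to be the second case of maximality, i.e. correctly invoking the blocking-set inequality to rule out $(n+1)$-subsets of $[n^2+n+1]$ that are not lines. One must be careful that the strict inequality $n + \sqrt n + 1 > n+1$ holds for all $n \geq 2$ (it does, since $\sqrt n > 0$), and that a set of size $n+1$ meeting every line without containing a line would be a blocking set of size $n+1$, which the bound forbids. The first case ($A$ using a point outside $[n^2+n+1]$) is comparatively routine: since $A$ has $n+1$ points, at most $n+1$ lines pass through any of them, but there are $n^2+n+1 > n+1$ lines, so some line avoids all points of $A$ — actually one can be even more direct, since a point outside the plane lies on no line at all, one only needs a line disjoint from the at most $n+1$ points of $A$ inside the plane, and the counting $n^2+n+1 > (n+1)^2$ fails, so one should instead observe that the at most $n+1$ points of $A$ inside the plane, if they do not form a blocking set, miss some line; and if they do form a blocking set they have size $\geq n+\sqrt n + 1 > n+1$, impossible since $|A| = n+1$ and $A$ also contains $x$. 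Assembling these observations gives the result.
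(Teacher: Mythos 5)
Your proof is correct and uses the same key ingredient as the paper: the Brouwer--Schrijver lower bound on the size of a blocking set. You are in fact more careful than the paper's own proof, which simply asserts that the hypothetical extra vertex $B$ ``is a blocking set'' without checking that $B$ lies inside $[n^2+n+1]$ or that $B$ does not itself contain a line; you handle both subcases explicitly (and correctly observe that a size-$(n+1)$ subset of the point set containing a line must equal that line). One small tidying remark: in the case where $A$ has a point outside the plane, at most $n$ points of $A$ lie inside $[n^2+n+1]$, so at most $n(n+1)=n^2+n<n^2+n+1$ lines meet $A$ and a direct count already furnishes a disjoint line, so your worry about the $(n+1)^2$ count is moot --- though the blocking-set fallback you give for that case also works.
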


\begin{proof} First, notice the collection $E$ of edges in the hypergraph $G$ is a subset of $\mathcal{F}^{[m]}_{n+1}$. Hence the elements in $E$ are also the vertices in the complex  $\mathcal{VR}(\mathcal{F}^{[m]}_{n+1}, 2n)$. Then $E$ forms a simplex in $\mathcal{VR}(\mathcal{F}^{[m]}_{n+1}, 2n)$ because the intersection of any pair of the vertices in $E$ contains exactly one element in $[n^2+n+1]$, i.e. their distance is $2n$ by Lemma~\ref{dist}. 

Suppose, for contradiction, that the simplex $E$ is not maximal in the complex $\mathcal{VR}(\mathcal{F}^{[m]}_{n+1}, 2n)$. Then we take a vertex $B\in \mathcal{F}_{n+1}^{[m]}$ such that $B\notin E$ and $d(B, A)\leq 2n$ for each vertex $A\in E$. Hence by Lemma~\ref{dist}, $B\cap A\neq \emptyset$ for each vertex $A\in E$. Then $B$ is a blocking set of the projective plane. Note that $B$ has size $n+1$. This is impossible because all the blocking sets of a projective plane with order $n$ have size $\geq n+\sqrt{n}+1$ by the result in \cite{BS78}.  This contradiction shows that $E$ forms a maximal simplex in the complex $\mathcal{VR}(\mathcal{F}_{n+1}^{[m]}, 2n)$. \end{proof}

We start with one of the maximal simplices obtained from Fano plane in the complex $\mathcal{VR}(\mathcal{F}_3^{[m]}, 4)$ with $m\geq 7$ and extend it to be a cross-polytopal generator in $6$-dimensional homology. And this shows that the $6$-dimensional homology of these complexes is non-trivial. It is worth to mention that the approach in the following result fails for a maximal simplex obtained from the projective plane of order $n\geq 3$.

\begin{theorem} \label{n3_nontrivial_hom} Let $m$ be an integer $\geq 7$ and $$\sigma = \{\{1, 2, 3\}, \{1, 4, 5\}, \{1, 6, 7\}, \{2, 4, 6\}, \{2, 5, 7\}, \{3, 4, 7\}, \{3, 5, 6\}\}.$$ 
Then the following hold. 

\begin{itemize}

\item[1)] The collection $\sigma$ forms a maximal simplex in the complex $\mathcal{VR}(\mathcal{F}_3^{[m]}, 4)$.

\item[2)] Fix $A\in \sigma$ and $A'$ being a $3$-subset  of $[7]\setminus A$. Then  $d(A', B)\leq 4$ for any $B\in \sigma$ with $B\neq A$. 

\item[3)] Fix $S$ be a $6$-subset in $[7]$ and for each $A\in \sigma$ define $\psi_S(A)$ to be a $3$-subset of $S\setminus A$. Then the complex $\mathcal{VR}(\sigma\cup \psi_S(\sigma); 4)$ is cross-polytopal with $14$ vertices and hence it is homotopy equivalent to $S^6$.  

\item[4)] The homology group $H_6(\mathcal{VR}(\mathcal{F}_3^{[m]}; 4))$ is nontrivial for $m\geq 7$. 
\end{itemize}
\end{theorem}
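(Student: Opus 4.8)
The plan is to prove the four items in order, arranging (1)--(3) precisely so that (4) drops out of Lemma~\ref{cross_p_injective}. Item (1) is immediate from Lemma~\ref{max_pp} applied with $n=2$: the Fano plane is the projective plane of order $2$, so here $n+1=3$, $n^2+n+1=7$ and $2n=4$, and the displayed $\sigma$ is exactly its line set; hence $\sigma$ is a maximal simplex of $\mathcal{VR}(\mathcal{F}_3^{[m]},4)$ for every $m\ge 7$.

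For (2) the only fact needed is that two distinct lines of the Fano plane meet in exactly one point. So for $A,B\in\sigma$ with $B\ne A$ we have $|A\cap B|=1$, hence $B\setminus A$ has two elements, both lying in the $4$-element set $[7]\setminus A$. Since $A'$ is a $3$-subset of $[7]\setminus A$, it omits only one element of $[7]\setminus A$, so it must contain at least one of the two elements of $B\setminus A$; thus $A'\cap B\ne\emptyset$, and Lemma~\ref{dist} gives $d(A',B)\le 2(n-1)=4$.

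For (3) put $V=\sigma\cup\psi_S(\sigma)$. First I would check $|V|=14$: $\psi_S(A)\cap A=\emptyset$ rules out $\psi_S(A)=A$; $\psi_S(A)=B\in\sigma$ would make the distinct lines $A,B$ disjoint, impossible; and $\psi_S(A)=\psi_S(B)$ with $A\ne B$ would force the $3$-set $\psi_S(A)$ into $S\setminus(A\cup B)$, which has at most $7-|A\cup B|=2$ elements, again impossible. Next I would identify the $1$-skeleton of $\mathcal{VR}(V;4)$ with the complete multipartite graph on the $7$ pairs $\{A,\psi_S(A)\}$: distinct lines $A,B$ are joined since $|A\cap B|=1$; $A$ and $\psi_S(A)$ are not joined, being disjoint; $A$ and $\psi_S(B)$ with $B\ne A$ are joined by (2) applied with $A'=\psi_S(B)$; and $\psi_S(A),\psi_S(B)$ with $A\ne B$ are joined because they are subsets of $S$ whose sizes sum to $6=|S|$, so if they were disjoint we would get $\psi_S(B)=S\setminus\psi_S(A)\supseteq S\cap A$ and symmetrically $\psi_S(A)\supseteq S\cap B$, whence $S\cap B\subseteq\psi_S(A)\subseteq S\setminus B$, forcing $S\cap B=\emptyset$ and contradicting $|B\setminus S|\le|[7]\setminus S|=1$. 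Since every Vietoris--Rips complex is a clique complex, $\mathcal{VR}(V;4)$ is therefore the clique complex of the cocktail-party graph on $7$ antipodal pairs, i.e. the boundary of the $7$-dimensional cross-polytope, hence cross-polytopal and homotopy equivalent to $S^6$.

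Finally, for (4), set $K=\mathcal{VR}(\mathcal{F}_3^{[m]};4)$ and $L=\mathcal{VR}(V;4)\subseteq K$. By (3), $L$ is cross-polytopal with $2d=14$ vertices, and its maximal antipode-free simplices are exactly the transversals choosing one vertex from each pair $\{A,\psi_S(A)\}$; one of these is $\sigma$ itself, which by (1) is maximal in $K$. So Lemma~\ref{cross_p_injective} applies and gives that $\imath_\ast\colon H_6(L)\to H_6(K)$ is injective, and since $H_6(L)=H_6(S^6)\ne 0$ we conclude $H_6(\mathcal{VR}(\mathcal{F}_3^{[m]};4))\ne 0$. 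The one place that genuinely needs care is item (3): one must confirm the distinctness of the $14$ vertices and the exact edge relation \emph{uniformly} over all admissible choices of the $3$-subsets $\psi_S(A)$ (there is real freedom of choice precisely for the three lines $A$ with $A\not\subseteq S$), but every inequality above is choice-independent, so once (3) is in place, (4) is automatic.
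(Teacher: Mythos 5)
Your parts 1), 2), and 4) follow essentially the paper's own argument (Lemma~\ref{max_pp} for 1), the line-intersection count for 2), and Lemma~\ref{cross_p_injective} applied to the cross-polytopal $L$ for 4)), and the distinctness checks and the identification of the $1$-skeleton in 3) are useful extra detail that the paper leaves implicit. However, the key step in 3) --- showing $\psi_S(A)\cap\psi_S(B)\neq\emptyset$ when $A\cap B$ is the single point of $[7]\setminus S$ --- has a genuine gap, and your closing remark that ``every inequality above is choice-independent'' is exactly what fails. In the chain ``$S\cap B\subseteq\psi_S(A)\subseteq S\setminus B$'' the second inclusion is unjustified: $\psi_S(A)$ is only known to lie in $S\setminus A$, not in $S\setminus B$, so no contradiction results.

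Indeed the claim is false for some admissible choices of $\psi_S$. Take $S=\{1,\dots,6\}$, $A=\{1,6,7\}$, $B=\{2,5,7\}$. Then $S\setminus A=\{2,3,4,5\}$, $S\setminus B=\{1,3,4,6\}$, and $\psi_S(A)=\{2,3,5\}$, $\psi_S(B)=\{1,4,6\}$ are legitimate $3$-subsets that are disjoint, so the edge between $\psi_S(A)$ and $\psi_S(B)$ is missing and $\mathcal{VR}(\sigma\cup\psi_S(\sigma);4)$ is \emph{not} the boundary of a cross-polytope. What is true, and what the argument needs, is that for the three lines containing the excluded point there is freedom in choosing $\psi_S$ (the other four choices are forced), and one can always pick these three so that they pairwise intersect --- e.g. with $S=[6]$, take $\psi_S(\{1,6,7\})=\{2,3,4\}$, $\psi_S(\{2,5,7\})=\{1,3,4\}$, $\psi_S(\{3,4,7\})=\{1,2,5\}$. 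With such a choice, your cases for the other vertex pairs are genuinely choice-independent (as the paper's cases i) and ii) show), so 3) holds and 4) follows as you wrote. For what it's worth, the paper's own treatment of this case (case iii in its proof of part 3) merely asserts $|\psi_S(A)\cap\psi_S(B)|\ge 1$ from the cardinality data, which does not suffice; the gap is present there as well and requires the same fix.
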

\begin{proof} For part 1), the collection $\sigma$ forms an maximal simplex in $\mathcal{VR}(\mathcal{F}_3^{[m]}, 4)$ by Lemma~\ref{max_pp} since it is the collection of lines in a projective plane of order $2$ with points be $[7]$. 

To prove part 2), we fix $A\in \sigma$ and an arbitrary $3$-subset $A'$ of $[7]\setminus A$. Pick $B\in \sigma$ such that $B\neq A$. Then by the property of projective plane, $|B\cap A|=1$; hence the size of the intersection $B\cap([7]\setminus A)$ is $2$. Because the size of the complement of $A$ in $[7]$ is $4$, $B$ has a nonempty intersection with any $3$-subset of $[7]\setminus A$. This implies that $d(A', B)\leq 4$ by Lemma~\ref{dist}. 

Next, we'll prove part 3). By part 2), it is sufficient to show that $d(\psi_S(A), \psi_S(B)) \leq 4$ for any pair $A, B\in \sigma$. By Lemma~\ref{dist}, we need to show that $\psi_S(A)\cap \psi_S(B)\neq \emptyset$ for any pair $A, B\in \sigma$. Fix $A, B\in \sigma$. Notice that $|A\cap B|=1$. We divide the proof into the following three cases. 
\begin{itemize}
\item[i)] Suppose that $A, B\subset S$. Then $\psi_S(A)=S\setminus A$ and $\psi_S(B)=S\setminus B$. Since $|A\cap B|=1$, the size of the intersection of $A$ with the complement  $B$ in $S$, $\psi_S(B)$,  is $2$.  Therefore the size of the intersection of $\psi_S(A)$ with $\psi_S(B)$ is $1$, i.e.,  $|\psi_S(A)\cap \psi_S(B)|=1$. 

\item[ii)] Suppose that $A\subseteq S$ and $[7]\setminus S\subset B$. Note that $|A\cap B|=1$ and $A\cap B\in S$. Then $|A\cap (S\setminus B)|=2$, hence $|\psi_S(A) \cap (S\setminus B)|= 2$. Since $|S\setminus B|=4$, $|\psi_S(A) \cap \psi_S(B)|= 1$.    

\item[iii)] Suppose that $A\cap B\in [7]\setminus S$. Then the size of the complement of $A\cup B$ in $S$ is  $2$ which means that $|(S\setminus A)\cap (S\setminus B)|=|S\setminus (A\cup B)| = 2$. Note that $|S\setminus B|= |S\setminus A|=4$. Then, $|\psi_S(A)\cap \psi_S(B)| \geq 1$. 
\end{itemize}
Hence the complex $\mathcal{VR}(\sigma\cup \psi_S(\sigma); 4)$ is cross-polytopal. This finishes the proof of part 3).

For part 4), we fix $m\geq 7$ and $S=[6]$. As in part 3) we define $\psi_S(A)$ for each $A\in \sigma$. Consider the complex  $L=\mathcal{VR}(\sigma\cup \psi_S(\sigma); 4)$ which is cross-polytopal by part 3). The simplex $\sigma$ is in the collection of collection of maximal antipode-free simplices of $L$ and $\sigma$ is a maximal simplex in $\mathcal{VR}(\mathcal{F}_3^{[m]}, 4)$. Therefore by Lemma~\ref{cross_p_injective}, the inclusion map from $L$ to $\mathcal{VR}(\mathcal{F}_3^{[m]}, 4)$ induces an injective homology homomorphism. Since $L$ is homotopy equivalent to $S^6$, the $6$-dimensional homology of $\mathcal{VR}(\mathcal{F}_3^{[m]}, 4)$ is nontrivial. 
\end{proof}

Fix $m\geq 2n$ and $p=\frac{1}{2}{2n\choose n}-1$. Next, we use the same strategy to prove that the $p$-dimensional homology of the complex $\mathcal{VR}(\mathcal{F}^{[m]}_n; 2(n-1))$ is nontrivial, and also we obtain lower bounds of the rank of $p$-dimensional homology group.  Fix $S=\{s_1, s_2, \ldots, s_{2n}\}$ being a subset of $[m]$. We consider an isometric embedding $\jmath_S: \mathcal{F}_n^{[2n]}\hookrightarrow \mathcal{F}_n^{[m]}$ induced by the natural bijective correspondence $f(i)=s_i$ from $[2n]$ to $S$. Notice that $\jmath_S(\{i_1, \ldots, i_n\})=\{s_{i_1}, \ldots, s_{i_n}\}$ for $\{i_1, \ldots, i_n\}\in \mathcal{F}_n^{[2n]}$ which clearly induces a simplicial map from $\mathcal{VR}(\mathcal{F}_n^{[2n]}; 2(n-1))$ to $\mathcal{VR}(\mathcal{F}_n^{[m]}; 2(n-1))$ and we use the same notation for the induced simplical map.  We show that the induced homology homomorphism  $(\jmath_S)_\ast$  is injective on $p$-dimensional homology; and furthermore, we obtain a lower bound on the rank of $p$-dimensional homology on  $\mathcal{VR}(\mathcal{F}_n^{[m]}; 2(n-1))$ for general $m$ which is hence also a lower bound on the rank of $p$-dimensional homology on the independence complex, Ind($KG(n, k)$), when $2n+k=m$. 

It is straightforward to determine the homotopy type of the complex  $\mathcal{VR}(\mathcal{F}_n^{[2n]}; 2(n-1))$. Observe that any vertex in the complex  $\mathcal{VR}(\mathcal{F}_n^{[2n]}; 2(n-1))$ is adjacent to all other vertices but its complement in $[2n]$. Hence the clique complex  $\mathcal{VR}(\mathcal{F}_n^{[2n]}; 2(n-1))$ is cross-polytopal and homotopy equivalent to a $p$-dimensional sphere by the discussion in Section~\ref{intro}. We denote the collection of maximal antipode-free simplices in the complex $\mathcal{VR}(\mathcal{F}_n^{[2n]}; 2(n-1))$ by $\mathcal{A}_n$.  We show in Lemma~\ref{max_2n} that at least one element in the collection $\mathcal{A}_n$ is maximal in the complex $\mathcal{VR}(\mathcal{F}_n^{[m]}; 2(n-1))$ for any $m\geq 2n$. 

For each simplex $\sigma$ in the complex $\mathcal{VR}(\mathcal{F}_n^{[m]}; 2(n-1))$ for $m\geq n$, we denote cHull($\sigma$) to the convex hull of $\sigma$, $\mathcal{VR}(\mathcal{F}_n^S; 2(n-1))$, where $S=\bigcup\{A: A\in \sigma\}$.

\begin{lemma}\label{max_2n} Let $m\geq 2n$ with $n\geq 3$ and $S$ be a $2n$-subset of $[m]$ listed by $\{i_1, i_2, \ldots, i_{2n}\}$. 

Then, there exists a simplex $\sigma$ in $\mathcal{A}_n$ such that $\jmath_S(\sigma)$ is a maximal simplex in $\mathcal{VR}(\mathcal{F}_n^{[m]}; 2(n-1))$ and cHull($\jmath_S(\sigma))=\jmath_S (\mathcal{F}_{n}^{[2n]})$. \end{lemma}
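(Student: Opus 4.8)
The plan is to recast the two desired conclusions as conditions on a member $\sigma$ of $\mathcal A_n$, reduce maximality of $\jmath_S(\sigma)$ to a simple covering property of $\sigma$ inside $\mathcal F_n^{[2n]}$, and then build an explicit $\sigma$ by a two-step surgery on an obvious candidate.

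First I would unwind the statement. Each $\sigma\in\mathcal A_n$ contains exactly one set from every complementary pair $\{A,[2n]\setminus A\}$ of $n$-subsets of $[2n]$, so, $\jmath_S$ being a bijection $[2n]\to S$, the identity $\mathrm{cHull}(\jmath_S(\sigma))=\jmath_S(\mathcal F_n^{[2n]})$ amounts exactly to $\bigcup_{A\in\sigma}A=[2n]$; call this \emph{full support}. For maximality of $\jmath_S(\sigma)$ in $\mathcal{VR}(\mathcal F_n^{[m]};2(n-1))$: by Lemma~\ref{dist}, a vertex $B\notin\jmath_S(\sigma)$ is adjacent to all of $\jmath_S(\sigma)$ iff $B$ meets every member of $\jmath_S(\sigma)$. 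If $B\subseteq S$ this is impossible for any $\sigma\in\mathcal A_n$, since $\jmath_S^{-1}(B)\notin\sigma$ forces its complement into $\sigma$, and that complement is disjoint from $B$. If $B\not\subseteq S$, then $B$ meets every $A\in\jmath_S(\sigma)\subseteq\binom{S}{n}$ iff $C:=B\cap S$, which has size $\le n-1$, does; pulling this back through $\jmath_S$, one sees that no such $B$ can be adjacent to all of $\jmath_S(\sigma)$ precisely when $\sigma$ has the property $(\star)$: \emph{every $(n+1)$-subset of $[2n]$ contains a member of $\sigma$}. So the lemma reduces to producing $\sigma\in\mathcal A_n$ with full support and property $(\star)$.

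Second, I would construct $\sigma$. The family $\sigma_0:=\binom{[2n-1]}{n}$ of all $n$-subsets of $[2n]$ avoiding the element $2n$ lies in $\mathcal A_n$ and satisfies $(\star)$ for an obvious reason — an $(n+1)$-set $D$ either avoids $2n$, so all its $n$-subsets lie in $\sigma_0$, or contains $2n$, and then $D\setminus\{2n\}\in\sigma_0$ — but its support is only $[2n-1]$. To bring $2n$ into the support without spoiling $(\star)$ I would flip two complementary pairs. Fix an $n$-set $A\subseteq[2n-1]$ and an element $a\in A$, put $\bar A:=[2n]\setminus A$, $A_a:=(A\setminus\{a\})\cup\{2n\}$, and $\overline{A_a}:=[2n]\setminus A_a=([2n-1]\setminus A)\cup\{a\}\in\sigma_0$, and set
$$\sigma:=\bigl(\sigma_0\setminus\{A,\overline{A_a}\}\bigr)\cup\{\bar A,A_a\}.$$
Then $\sigma\in\mathcal A_n$ (two distinct pairs have been flipped), $\sigma$ has full support (since $2n\in\bar A$, while for $n\ge3$ every $j\in[2n-1]$ still lies in at least $\binom{2n-2}{n-1}-2>0$ members), and $\sigma$ satisfies $(\star)$ by a short case check: an $(n+1)$-set avoiding $2n$ keeps at least $(n+1)-2\ge1$ of its $n$-subsets in $\sigma$; and an $(n+1)$-set $D\ni 2n$, with $D':=D\setminus\{2n\}\in\binom{[2n-1]}{n}$, has $D'\in\sigma$ unless $D'\in\{A,\overline{A_a}\}$, while in those two cases $D=A\cup\{2n\}\supseteq A_a\in\sigma$, respectively $D=\overline{A_a}\cup\{2n\}=\bar A\cup\{a\}\supseteq\bar A\in\sigma$.

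The hard part is choosing the surgery in the second step. A single flip $A\leftrightarrow\bar A$ always breaks $(\star)$: the $(n+1)$-set $A\cup\{2n\}$ is then covered by no member of $\sigma$. The observation that makes it work is that one can pick a second, carefully matched flip $\overline{A_a}\leftrightarrow A_a$ which both places a member of $\sigma$ below $A\cup\{2n\}$ and, at the same time, covers the unique new ``bad'' $(n+1)$-set $\bar A\cup\{a\}$ that this second flip would otherwise create. Once $(\star)$ has been isolated as the correct reformulation of maximality, the reduction in the first step and the verification that $\sigma$ is still an antipode-free maximal simplex with full support are routine bookkeeping.
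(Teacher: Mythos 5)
Your proof is correct and follows essentially the same route as the paper's: the paper's $\sigma$ is exactly your construction with the specific choice $A=\{1,\dots,n\}$ and $a=1$, and the paper's maximality argument (passing to $T=[2n]\setminus B$, which is an $(n+1)$-set, and exhibiting an $n$-subset of $T$ lying in $\sigma$) is a concrete instance of your covering property $(\star)$. Your version is slightly cleaner in that it isolates $(\star)$ as the reformulation of maximality and allows an arbitrary matched pair $(A,a)$ for the two-flip surgery, but the underlying idea and construction are the same.
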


\begin{proof} To define $\sigma$, we start with all the $n$-subsets of $[2n-1]$, denoted by $\tau$. Then we define $\sigma$ to the collection obtained by  replacing $\{1, 2, \ldots, n\}$ and $\{1, n+1, \ldots, 2n-1\}$ in $\tau$ by their complements in $[2n]$, i.e., $\{n+1, n+2, \ldots, 2n\}$ and $\{2, 3, \ldots, n, 2n\}$; hence $\sigma$ is antipodal-free and maximal. So $\sigma$ is in $\mathcal{A}_n$ and it is a maximal simplex in $\mathcal{VR}(\mathcal{F}_n^{[2n]}, 2(n-1))$.

We claim that $\sigma$ is a maximal simplex in the complex $\mathcal{VR}(\mathcal{F}_{n}^{[m]}; 2(n-1))$.  This implies that $\jmath_S(\sigma)$ is maximal in $\mathcal{F}_n^{[m]}$ due to the fact that $\jmath_S$ is isometric. 

For convenience, we denote $c=n-1$. To prove the claim using contradiction, suppose that $\sigma$ is not maximal in the complex $\mathcal{VR}(\mathcal{F}_{n}^{[m]}; 2c)$. Then there is an $B\in \mathcal{F}_{n}^{m}$ with $\sigma\cup \{B\}$ is still a simplex in the complex $\mathcal{VR}(\mathcal{F}_n^{m}; 2c)$. This implies that $B\cap A\neq \emptyset$ for each $A\in \sigma$.  Since $\sigma$ is maximal in $\mathcal{F}_n^{[2n]}$, $B$ is not $\mathcal{F}_n^{[2n]}$ which means that $B \setminus [2n]\neq \emptyset$. Without loss of generality, assume that the size of the set  $B\setminus [2n]$ is $1$. And we denote $B\setminus [2n]=\{j_0\}$ and $B=\{j_0, i_1, \ldots, i_{n-1}\}$ where $i_1, i_2, \ldots, i_{n-1}$ are in $[2n]$. Let $T=[2n]\setminus B$ which contains $n+1$ elements. Notice that $B\cap \{n+1, n+2, \ldots, 2n\}\neq \emptyset$ and $B\cap \{2, 3,  \ldots, n, 2n\}\neq \emptyset$. Then  we divide the proof in the following two cases. 

\begin{itemize}
\item[i)] Suppose $2n\in B$. Then $2n\notin T$. Choose an $n$-subset $D\subset [2n-1]$ of $T$ which is different from $\{1, 2, \ldots, n\}$ and $\{1, n+1, \ldots, 2n-1\}$. Note that $T$ is consisting of $n+1$ numbers $\leq 2n-1$ and $T$ has $(n+1)$-many $n$-subsets all of which are in $\sigma$ except for  $\{1, 2, \ldots, n\}$ and $\{1, n+1, \ldots, 2n-1\}$.  Hence such a $D$ exists. So  $D$ is an element in  $\sigma$, but $D\cap B=\emptyset$. Therefore, $d(B, D)= 2n$ which is a contradiction.   

\item[ii)] Now we suppose $2n\notin B$. Since $B\cap \{n+1, n+2, \ldots, 2n\}\neq \emptyset$, there  a $j_1\in B$ with $n+1\leq j_1\leq 2n-1$. And since $B\cap \{2, 3,  \ldots, n, 2n\}\neq \emptyset$,  there a $j_2\in B$ with $2\leq j_2\leq n$. Also, notice that $2n$ is in $T$. Then let $D=T\setminus \{2n\}$; and then $D$ is an $n$-subset of $[2n-1]$. Notice that $j_1, j_2\notin T$; this implies that $D$ is an $n$-subset of $[2n-1]$ different from $\{1, 2, \ldots, n\}$ and $\{1, n+1, \ldots, 2n-1\}$. Hence $D\in \sigma$ and $B\cap D=\emptyset$ which is a contradiction.  \end{itemize}
Hence $\imath_S(\sigma)$ is a maximal simplex in the complex $\mathcal{VR}(\mathcal{F}_{n}^{[m]}; 2c)$.   

Note that $S=\bigcup \jmath_S(\sigma)$ since $\bigcup \sigma=[2n]$. Hence, the convex hull of $\imath_S(\sigma)$ is the subcomplex $\mathcal{VR}(\mathcal{F}_{n}^{S}; 2c)$.  \end{proof}

Using the maximal simplicies from Lemma~\ref{max_2n}, we prove a lower bound for the rank of $p$-dimensional homology group of the complex $\mathcal{VR}(\mathcal{F}_n^{[m]}; 2(n-1))$ for all $m\geq 2n$. Examples of different $n$ and $k$ are given in Table~\ref{tab:largest_dim_rank}. Note that ranks of the $9$-dimensional homology when $n=3$ and $k=1, 2, 3$ match with the computed results in Table~\ref{KG_3_homology}. 

\begin{theorem}\label{low_b_bigdim}  Suppose that $n\geq 3$, $m\geq 2n$ and $p = \frac{1}{2}{2n\choose n}-1$. Then   

$$\text{rank}(H_p(\mathcal{VR}(\mathcal{F}_n^{[m]}; 2(n-1))))\geq {m\choose 2n}.$$

\end{theorem}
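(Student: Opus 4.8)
The plan is to combine Lemma~\ref{max_2n} with Lemma~\ref{cross_p_injective} and then count how many distinct copies of the cross-polytopal sphere $S^p$ sit inside $\mathcal{VR}(\mathcal{F}_n^{[m]};2(n-1))$ with pairwise-independent homology classes. First I would fix, for each $2n$-subset $S$ of $[m]$, the isometric embedding $\jmath_S:\mathcal{F}_n^{[2n]}\hookrightarrow \mathcal{F}_n^{[m]}$ and the cross-polytopal subcomplex $L_S = \jmath_S(\mathcal{VR}(\mathcal{F}_n^{[2n]};2(n-1)))$, which is homotopy equivalent to $S^p$ by the discussion preceding Lemma~\ref{max_2n}. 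By Lemma~\ref{max_2n} there is an antipode-free maximal simplex $\sigma_S\in\mathcal{A}_n$ (transported via $\jmath_S$) that is maximal in the whole complex, with $\mathrm{cHull}(\jmath_S(\sigma_S)) = \jmath_S(\mathcal{F}_n^{[2n]}) = L_S$; hence Lemma~\ref{cross_p_injective} shows each inclusion $L_S\hookrightarrow \mathcal{VR}(\mathcal{F}_n^{[m]};2(n-1))$ is injective on $H_p$. This already gives non-triviality; the real content is that the $\binom{m}{2n}$ classes $\{(\jmath_S)_*[\gamma_S]\}_S$ are linearly independent in $H_p(\mathcal{VR}(\mathcal{F}_n^{[m]};2(n-1)))$.

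To prove linear independence, I would run the argument from the proof of Lemma~\ref{cross_p_injective} simultaneously over all $S$, using that distinct $S$ give distinct maximal simplices. Concretely, suppose $\sum_S \lambda_S (\jmath_S)_*[\gamma_S] = 0$ with not all $\lambda_S = 0$. Pick any $S_0$ with $\lambda_{S_0}\neq 0$. The cycle $\sum_S \lambda_S \jmath_S(\gamma_S)$ is then a boundary; but the maximal simplex $\jmath_{S_0}(\sigma_{S_0})$ appears in $\jmath_{S_0}(\gamma_{S_0})$ with coefficient $\pm 1$, and I must argue it appears in no other $\jmath_S(\gamma_S)$ for $S\neq S_0$ — this is because $\bigcup \jmath_S(\sigma_{S_0})$ has underlying ground set exactly $S_0$ (as $\bigcup\sigma_{S_0} = [2n]$), so a simplex whose vertices cover $S_0$ cannot also be a face of $L_S$ unless $S\supseteq S_0$, forcing $S = S_0$ by cardinality. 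Hence $\jmath_{S_0}(\sigma_{S_0})$ appears in the total cycle with coefficient $\pm\lambda_{S_0}\neq 0$, and by Lemma~\ref{same_coef_max} it must appear with that coefficient in any representative, contradicting maximality (a maximal $p$-simplex cannot appear in a $p$-boundary). This yields $\mathrm{rank}(H_p)\geq \binom{m}{2n}$.

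I would write this cleanly by first isolating the small combinatorial fact as a preliminary observation: \emph{if $S\neq S'$ are $2n$-subsets of $[m]$ and $v$ is a vertex of $L_{S}$ whose $n$-sets collectively cover $S$, then $v$ is not a simplex of $L_{S'}$.} The proof is immediate from $\bigcup L_{S'}^{(0)}$ having ground set $S'$. With that in hand, the main theorem's proof is essentially the single-subcomplex argument of Lemma~\ref{cross_p_injective} applied to a minimal-support counterexample to linear independence.

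The main obstacle I anticipate is the bookkeeping of which simplices of $L_{S_0}$ can be shared with $L_S$: one must be careful that the \emph{whole} maximal simplex $\jmath_{S_0}(\sigma_{S_0})$ (not merely some of its vertices) fails to lie in $L_S$ for $S\neq S_0$, and that Lemma~\ref{same_coef_max} is applicable because $\jmath_{S_0}(\sigma_{S_0})$ is genuinely maximal in the ambient complex by Lemma~\ref{max_2n}, not just maximal inside $L_{S_0}$. Everything else — the homotopy type of $\mathcal{VR}(\mathcal{F}_n^{[2n]};2(n-1))$, the injectivity of each $(\jmath_S)_*$, the existence of $\sigma_S$ — is already furnished by the cited lemmas, so no further topological input is needed.
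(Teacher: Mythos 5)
Your proposal is correct and follows essentially the same route as the paper: one cross-polytopal subcomplex $L_S$ per $2n$-subset $S$, the maximal simplex $\jmath_S(\sigma_S)$ from Lemma~\ref{max_2n}, and the ground-set observation $\bigcup\jmath_S(\sigma_S)=S$ to isolate each generator. The only difference is cosmetic: where you apply Lemma~\ref{same_coef_max} directly to a hypothetical vanishing linear combination, the paper packages the same step by pairing against the dual cocycles $\omega_i$ supported on the maximal simplices $\sigma_i$ (a cap-product formulation), but the underlying argument is identical.
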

\begin{proof} There are ${m\choose 2n}$-many subsets of $[m]$ and we list them as $S_1, S_2, \ldots, S_{m\choose 2n}$. For each $S_i$, we can fix a bijective map from $[2n]$ to $S_i$ which induces an  isometric embedding of the metric space $\mathcal{F}_n^{[2n]}$ into $\mathcal{F}_n^{[m]}$. Hence there are ${m\choose 2n}$-many natural embeddings of $\mathcal{F}_n^{[2n]}$ in $\mathcal{F}_n^{[m]}$.  

Fix $i=1, 2, \ldots, {m\choose 2n}$. Let $\sigma_i$ be the maximal simplex in the complex $\mathcal{VR}(S_i;2(n-1))$ which satisfies the conditions in Lemma~\ref{max_2n}, i.e., $\sigma_i$ is also a maximal simplex in $\mathcal{VR}(\mathcal{F}_n^{[m]};2(n-1))$  and its convex hull is $\mathcal{VR}(\mathcal{F}^{S_i}_n; 2(n-1))$. Let $\alpha_i$ be one representation of the cross-polytopal generator of $H_p(\mathcal{VR}(S_i; 2(n-1)))$ such that the coefficient of $\sigma_i$ is $1$. And let $\omega_i$ be the $p$-cochain on the complex $\mathcal{VR}(\mathcal{F}_n^{[m]}; 2(n-1))$  which maps $\sigma_i$ to $1$ and all other $p$-simplices to $0$. Since $\sigma_i$ is maximal, $\omega_i$ is a $p$-cocycle. 

We claim that $\sigma_i$ can't be a term in $\alpha_j$ when $j\neq i$. Otherwise, by Lemma~\ref{max_2n}, $\bigcup \sigma_i=\bigcup \sigma_j$ for some $i\neq j$ and then $S_i=S_j$. This is a contradiction since  $S_i$ and $S_j$ are different $2n$-subset of $[m]$ by the assumption. 

Consider $[\alpha_i]$ as a homology class in $H_p(\mathcal{VR}(\mathcal{F}^{[m]}_n; 2(n-1))$ and  $[\omega_j]$ as a cohomology class in $H^p(\mathcal{VR}(\mathcal{F}^{[m]}_n; 2(n-1))$. By the claim above and Lemma~\ref{same_coef_max}, the cap product $[\omega_i]^\frown[\alpha_j] =1$ if and only if $i=j$; otherwise $0$. 

 We claim that $\{[\alpha_i]: i=1, 2, \ldots, {m\choose 2n}\}$ is a linearly independent collection of generators in the homology group $H_p(\mathcal{VR}(\mathcal{F}_n^{[m]}; 2(n-1))$. Assume that $$\sum_i\lambda_i[\alpha_i]=0.$$ Then,  we apply the cocycle class $[\omega_j]$ via the cap product to the equation and obtain that $\lambda_j=0$ for any $j=1, 2, \ldots, {m\choose 2n}$. This finishes the proof. \end{proof}
\begin{table}
 \caption{Lower bounds of the rank of $p$-dimensional homomology of independence complex of the Kneser graph KG($n, k$) where $p = \frac{1}{2}{2n\choose n}-1$.}
\label{tab:largest_dim_rank}
\begin{tabular}{ |c|c|c|c|c|c| } 
 \hline
 \backslashbox[2mm]{$n$}{$k$} & 1 & 2& 3 &  4 & 5 \\ 

 \hline

 3 & 7  &  28  & 84 & 210 & 462\\
 \hline
 4 & 9 &  45  & 165 & 495 & 1,287\\
 \hline
 5 & 11 & 66  & 286 & 1,001 & 3,003\\
 \hline
 6 & 13 &  91  & 455 & 1,820 & 6,188\\
 \hline
 7 & 15 & 120  & 680 & 3,060 & 11,628\\
 \hline
 8 & 17 & 153  & 969 & 4,845 & 20,349\\
 \hline
 9 & 19 & 190  & 1,330 & 7,315 & 33,649\\ 
 \hline
 10 & 21 & 231  & 1,771 & 10,626 & 53,130\\ 
 \hline
\end{tabular}

\end{table}

\section{Concentration Maps}\label{centration}

In order to establish the lower bounds for other dimensional homologies, 
we need to employ a contraction map from $\mathcal{F}^{S'}_n$ to $\mathcal{F}^{S}_n$ for subsets $S$ and $S'$ of $[m]$ with $S\subset S'$. Note that there is no natural projection map and so we introduce a stronger kind of contraction called concentration. Next we give its definition.    

\textbf{Concentration maps.} Fix $m$ and $n$  with $m\geq  n$. Choose $S'$ and $S$ to be subsets of $[m]$ such that: 1) $S'\supseteq S$; 2) $S$ has size $\geq n$.  We define a concentration map $\phi_S^{S'}$ from $\mathcal{F}_n^{S'}$ to $\mathcal{F}_n^{S}$ by the following: for each $A\in \mathcal{F}_n^{S'}$, let $\phi_S^{S'}(A)$ be the union of $A\cap S$ and the set of the smallest $|A\setminus S|$-many numbers in $S\setminus A$. So clearly $\phi^{S'}_S(A)=A$ for any $A\in \mathcal{F}^S_n$, i.e. the restriction of $\phi_S$ on $\mathcal{F}_n^S$ is the identity map.

The next result shows that the concentration map $\phi_S^{[m]}$ with $S\subset [m]$ can be represented as a composition atomic concentration maps, i.e. the concentration maps $\phi_S^{S'}$ with $|S'\setminus S|=1$ and $S\subset S'$. We skip the proof since it is straightforward to verify.

\begin{lemma}\label{decomp} Let $S$ be a subset of $[m]$ with $|S|\geq n$. Let $S_0, S_1, \ldots, S_k$ be a sequence of subsets of $[m]$ such that 
 $[m] =S_0$, $S_k = S$, and  $|S_i\setminus S_{i+1}|=1$. Then, $$\phi_S^{[m]} =\phi_{S_k}^{S_{k-1}}\circ \cdots \circ \phi_{S_1}^{S_0}.$$ 
\end{lemma}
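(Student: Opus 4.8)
The plan is to reduce the assertion to a single composition identity for concentration maps over a nested triple of sets, and then to verify that identity by unwinding the definition. Concretely, I would first establish the \emph{composition law}: whenever $S\subseteq S'\subseteq S''$ are subsets of $[m]$ with $|S|\geq n$, one has $\phi_S^{S''}=\phi_S^{S'}\circ\phi_{S'}^{S''}$ as maps $\mathcal{F}_n^{S''}\to\mathcal{F}_n^{S}$. Granting this, the lemma follows by a short induction along the decreasing chain $[m]=S_0\supseteq S_1\supseteq\cdots\supseteq S_k=S$: setting $\Phi_i:=\phi_{S_i}^{S_{i-1}}\circ\cdots\circ\phi_{S_1}^{S_0}$, one has $\Phi_1=\phi_{S_1}^{S_0}$, and the composition law applied to the triple $S_i\subseteq S_{i-1}\subseteq S_0$ together with the inductive hypothesis gives $\Phi_i=\phi_{S_i}^{S_{i-1}}\circ\Phi_{i-1}=\phi_{S_i}^{S_{i-1}}\circ\phi_{S_{i-1}}^{S_0}=\phi_{S_i}^{S_0}$; the case $i=k$ is exactly the statement. (Every $S_i$ contains $S$, hence has size at least $n$, so all the maps involved are defined; the atomicity hypothesis $|S_i\setminus S_{i+1}|=1$ plays no role in this argument and is there only for the later applications.)

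For the composition law, I would fix $A\in\mathcal{F}_n^{S''}$ and write $B:=A\cap S$, $b:=|B|$, and list $S\setminus A$ in increasing order as $f_1<f_2<\cdots<f_t$ with $t=|S|-b\geq n-b$. By definition $\phi_S^{S''}(A)=B\cup\{f_1,\dots,f_{n-b}\}$. Put $A':=\phi_{S'}^{S''}(A)$. The two facts to extract from the definition of $\phi_{S'}^{S''}$ are: (i) $B=A\cap S\subseteq A'\cap S$, since concentration only adjoins new elements to $A\cap S$ and never deletes one; and (ii) $A'\cap(S\setminus A)$ is an \emph{initial segment} $\{f_1,\dots,f_r\}$ of the list $f_1<\cdots<f_t$, for some $r$ with $0\leq r\leq n-b$. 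Fact (ii) is the crux: $\phi_{S'}^{S''}$ adjoins to $A\cap S'$ exactly the smallest elements of $S'\setminus A$, and because $S\setminus A$ sits inside $S'\setminus A$ with the same order, intersecting an initial segment of $S'\setminus A$ with $S\setminus A$ again produces an initial segment of $S\setminus A$. Combining (i) and (ii) yields $A'\cap S=B\cup\{f_1,\dots,f_r\}$ and hence $S\setminus A'=\{f_{r+1},\dots,f_t\}$.

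Finally I would evaluate $\phi_S^{S'}(A')$: by definition it consists of $A'\cap S$ together with the smallest $n-|A'\cap S|=n-b-r$ elements of $S\setminus A'=\{f_{r+1},\dots,f_t\}$, which are $\{f_{r+1},\dots,f_{n-b}\}$; therefore $\phi_S^{S'}(A')=B\cup\{f_1,\dots,f_r\}\cup\{f_{r+1},\dots,f_{n-b}\}=B\cup\{f_1,\dots,f_{n-b}\}=\phi_S^{S''}(A)$, which is the composition law. The only genuinely delicate points are observation (ii) and the accompanying cardinality bookkeeping — that $r\leq n-b$ (immediate once one notes $A'\cap S=B\sqcup\{f_1,\dots,f_r\}$ has at most $n$ elements) and that $\{f_{r+1},\dots,f_t\}$ has at least $n-b-r$ elements (again from $t\geq n-b$, i.e.\ from $|S|\geq n$). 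Everything else is routine manipulation of the explicit formula, so I expect (ii) to be the main — and quite mild — obstacle.
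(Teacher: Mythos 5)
Your proof is correct and complete. The paper itself declines to prove this lemma, stating only that it is ``straightforward to verify,'' so there is no argument in the paper to compare against; your proposal simply supplies the missing details. The structure you chose — first establish the transitivity identity $\phi_S^{S''}=\phi_S^{S'}\circ\phi_{S'}^{S''}$ for any nested triple $S\subseteq S'\subseteq S''$ with $|S|\geq n$, then peel off one factor at a time by induction along the chain — is the natural one, and your verification of the triple identity is right: the key observation (ii), that an initial segment of $S'\setminus A$ meets $S\setminus A$ in an initial segment of $S\setminus A$ because the two sets inherit the same linear order, is exactly what makes the ``fill with smallest available elements'' rule compose. Your cardinality checks ($r\leq n-b$ from $|A'|=n$, and $t\geq n-b$ from $|S|\geq n$) are correct and are the only places anything could go wrong. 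Two small remarks: the lemma's hypothesis $|S_i\setminus S_{i+1}|=1$ is implicitly read as $S_{i+1}\subset S_i$ with one element removed (otherwise $\phi_{S_{i+1}}^{S_i}$ is not even defined), which is what guarantees $S\subseteq S_i$ and hence $|S_i|\geq n$; you invoke this tacitly when you say ``every $S_i$ contains $S$.'' And you are right that the atomicity plays no role here — your transitivity identity is strictly more general than what the lemma asserts, which would let one state Lemma~\ref{decomp} for arbitrary decreasing chains, not just atomic ones.
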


 In order to prove that a general concentration map $\phi_S^{[m]}$ is a contraction, we start with proving the atomic concentration maps are contractions.   
\begin{lemma}
\label{contraction}Let $m$ and $n$ be integers with $m\geq n+1$. Let $S$ and $S'$ be subsets of $[m]$ such that $|S|\geq n$, $S'\supset S$, and $|S'\setminus S|=1$. Denote $S'\setminus S=\{\ell\}$. Then the mapping $\phi_S^{S'}$ from $\mathcal{F}_n^{S'}$ to $\mathcal{F}_n^{S}$ satisfying the following: 
\begin{itemize}
\item[1)] for each $A\in \mathcal{F}_n^{S'}$, $d(A, \phi_S^{S'}(A))$ is either $0$ or $2$;  

\item[2)] for any $A, B\in \mathcal{F}_n^{S'}$ with $A\neq B$, $d(\phi_S^{S'}(A),\phi_S^{S'}(B))$ is either $d(A, B)$ or $d(A, B)-2$. 

\end{itemize}
Hence, it is a contraction which induces a simplicial map from $\mathcal{VR}(\mathcal{F}^{S'}_n; r)$ to  $\mathcal{VR}(\mathcal{F}^S_n; r)$ for any scale $r\geq 0$.  \end{lemma}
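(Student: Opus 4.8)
The plan is to read off conditions 1) and 2) directly from the definition of $\phi_S^{S'}$, using crucially that $S'\setminus S=\{\ell\}$ is a single point with $\ell\notin S$, and then to obtain the contraction property and the induced simplicial map as formal consequences. Every $A\in\mathcal{F}_n^{S'}$ is of exactly one of two types: either $\ell\notin A$, in which case $A\subseteq S$, $|A\setminus S|=0$, and $\phi_S^{S'}(A)=A$; or $\ell\in A$, in which case $|A\setminus S|=1$ and $\phi_S^{S'}(A)=(A\setminus\{\ell\})\cup\{t_A\}$ with $t_A:=\min(S\setminus A)$ (this minimum exists since $|S|\geq n=|A|$ and $\ell\in A\setminus S$, so $S\not\subseteq A$). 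In the first case $d(A,\phi_S^{S'}(A))=0$; in the second, $\phi_S^{S'}(A)$ is obtained from $A$ by deleting $\ell$ and inserting $t_A\notin A$, so $|A\,\Delta\,\phi_S^{S'}(A)|=2$. This gives 1).

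For 2) I would convert distances into intersection sizes: since all sets have cardinality $n$, $d(X,Y)=2n-2|X\cap Y|$, so 2) is equivalent to showing $|\phi_S^{S'}(A)\cap\phi_S^{S'}(B)|\in\{\,|A\cap B|,\,|A\cap B|+1\,\}$ for $A\neq B$. If $\ell\notin A$ and $\ell\notin B$ this is trivial. If $\ell\in A$ and $\ell\notin B$ (and symmetrically), then $B\subseteq S$ and $\ell\notin A\cap B$; writing $\phi_S^{S'}(A)=(A\setminus\{\ell\})\sqcup\{t_A\}$ gives $\phi_S^{S'}(A)\cap B=(A\cap B)\sqcup(\{t_A\}\cap B)$, of size $|A\cap B|$ or $|A\cap B|+1$ according as $t_A\notin B$ or $t_A\in B$. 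The substantive case is $\ell\in A$ and $\ell\in B$, where $|(A\setminus\{\ell\})\cap(B\setminus\{\ell\})|=|A\cap B|-1$; using the disjoint decompositions $\phi_S^{S'}(A)=(A\setminus\{\ell\})\sqcup\{t_A\}$ and $\phi_S^{S'}(B)=(B\setminus\{\ell\})\sqcup\{t_B\}$, I would expand $\phi_S^{S'}(A)\cap\phi_S^{S'}(B)$ into four pairwise disjoint pieces. When $t_A=t_B=:t$ the two inserted points coincide and $t\notin A\cup B$, so the intersection is $\bigl((A\setminus\{\ell\})\cap(B\setminus\{\ell\})\bigr)\sqcup\{t\}$, of size $|A\cap B|$. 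When $t_A\neq t_B$, say $t_A<t_B$, minimality of $t_B$ in $S\setminus B$ together with $t_A\in S$ forces $t_A\in B$ (and $t_A\neq\ell$), so the piece $\{t_A\}\cap(B\setminus\{\ell\})$ contributes $1$, the piece $\{t_A\}\cap\{t_B\}$ is empty, and $\{t_B\}\cap(A\setminus\{\ell\})$ contributes the indicator of $t_B\in A$; the total is $|A\cap B|$ or $|A\cap B|+1$. Hence 2) holds in all cases.

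The main obstacle is purely combinatorial bookkeeping in this last subcase, and the one observation that makes it clean is that $t_A<t_B$ forces $t_A\in B$ by minimality of $t_B$ (and dually), so the inserted elements interact with the existing sets in a fully controlled way; no topology enters here.

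Finally, 1) and 2) yield $d(\phi_S^{S'}(A),\phi_S^{S'}(B))\leq d(A,B)$ for all $A,B\in\mathcal{F}_n^{S'}$ (trivially for $A=B$), and $\phi_S^{S'}$ restricts to the identity on the subspace $\mathcal{F}_n^S$, which is closed since the ambient space is finite; so $\phi_S^{S'}$ is a contraction in the sense recalled before Lemma~\ref{contraction_ind_inj}. Being distance non-increasing, it maps any simplex of $\mathcal{VR}(\mathcal{F}_n^{S'};r)$, i.e., any finite set of pairwise distance $\leq r$, to a set of pairwise distance $\leq r$, hence to a simplex of $\mathcal{VR}(\mathcal{F}_n^{S};r)$; this is precisely the assertion that $\phi_S^{S'}$ induces a simplicial map for every scale $r\geq 0$.
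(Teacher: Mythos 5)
Your proof is correct and follows essentially the same case structure as the paper's (splitting on whether $\ell$ lies in neither, exactly one, or both of $A,B$, and within the last case on whether the inserted elements coincide, using that $t_A<t_B$ forces $t_A\in B$ by minimality). The one genuine stylistic difference is that you translate everything into intersection sizes via $d(X,Y)=2n-2|X\cap Y|$ and then expand $\phi_S^{S'}(A)\cap\phi_S^{S'}(B)$ into four pairwise disjoint pieces, whereas the paper manipulates symmetric differences and set subtractions directly; your version makes the bookkeeping in case iii) a bit more transparent and avoids the slightly garbled set-difference formulas that appear in the paper's write-up of that case, but the underlying argument and the key combinatorial observation are the same.
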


\begin{proof} Take $A\in \mathcal{F}_n^{S'}$.  For each $A$ containing $\ell$, denote $i_A=\min\{i: i\in S\setminus A\}$; and clearly $i_A\neq \ell$. By the definition, $d(\phi_S^{S'}(A), A)=0$ if $\ell\notin A$. If $\ell \in A$, the symmetric difference of the sets $\phi_S^{S'}(A)$ and $A$ is $\{\ell, i_A\}$ which means that $d(\phi_S^{S'}(A), A) = 2$. Hence, part 1) holds.

Next we prove part 2). Take any pair $A, B\in \mathcal{F}_n^{[m]}$ with $A\neq B$.  We divide the proof into the following three cases.

\begin{itemize}
\item[i)] If $\ell\notin A\cup B$, then $d(\phi_S^{S'}(A), \phi_S^{S'}(B))=d(A, B)$ by the definition of $\phi_S^{S'}$.  

\item[ii)] Assume $\ell\in A\setminus B$, i.e. $\ell\in A$ but $\ell\notin B$. Then $\phi_S^{S'}(B)=B$. 

If $i_A\in B$, then $\phi_S^{S'}(A)\setminus B =(A\setminus B)\setminus \{\ell\}$ and $B\setminus \phi_S^{S'}(A) = (B\setminus A)\setminus \{i_A\}$ which means $d(\phi_S^{S'}(A), \phi_S^{S'}(B)) = d(A, B)-2$. 

If $i_A\notin B$, then $\phi_S^{S'}(A)\setminus B =((A\setminus B)\setminus \{\ell\})\cup \{i_A\}$ and $B\setminus \phi_S^{S'}(A) = B\setminus A$; hence in this case $d(\phi_S^{S'}(A), \phi_S^{S'}(B)) = d(A, B)$.    

\item[iii)] Assume that $\ell\in A\cap B$. Let $a =|A\cap B|$. Then $d(A, B) = 2(n-a)$. 

If $i_A=i_B$, then $\phi_S^{S'}(A)\Delta\phi_S^{S'}(B)=A\Delta B$; hence $d(\phi_S^{S'}(A), \phi_S^{S'}(B))=d(A,  B)$.

Now without loss of generality suppose that $i_A<i_B$. Then $i_A\in B\setminus A$ by definition of $i_B$. Notice that $i_A\neq \ell$. Then, $\phi_S^{S'}(B)\setminus \phi_S^{S'}(A) =((B\cup \{i_B\})\setminus ((A
\cup \{i_A\})\cap (B\cup\{i_B\}))$; and $\phi_S^{S'}(A)\setminus \phi_S^{S'}(B) = (A\cup \{i_A\})\setminus ((A\cup \{i_A\})\cap (B\cup\{i_B\}))$.  Then if $i_B\in A$, then $|\phi_S^{S'}(B)\setminus \phi_S^{S'}(A)| = n-a-1$ and $|\phi_S^{S'}(A)\setminus \phi_S^{S'}(B)| = n-a-1$ hence $d(\phi_S^{S'}(A), \phi_S^{S'}(B))= 2(n-a)-2 = d(A, B)-2$; otherwise,   $|\phi_S^{S'}(B)\setminus \phi_S^{S'}(A)| = n-a$ and $|\phi_S^{S'}(A)\setminus \phi_S^{S'}(B)| = n-a$, hence $d(\phi_S^{S'}(A), \phi_S^{S'}(B))= 2(n-a)=d(A, B)$.  \end{itemize}
\end{proof}

\begin{cor}\label{contraction_m_S}If $S$ is a subset of $[m]$ with $S\geq n$, $\phi_S^{[m]}$ is a contraction from $\mathcal{F}_n^{[m]}$ to $\mathcal{F}_n^{S}$ which induces a simplicial map from $\mathcal{VR}(\mathcal{F}^{[m]}_n; r)$ to  $\mathcal{VR}(\mathcal{F}^S_n; r)$ for any scale $r\geq 0$.\end{cor}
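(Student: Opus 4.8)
The plan is to deduce this corollary directly from the two preceding lemmas together with the observation, recorded just before Lemma~\ref{contraction_ind_inj}, that a composition of contractions along a nested chain of closed subspaces is again a contraction. First I would fix a chain of subsets $[m] = S_0 \supsetneq S_1 \supsetneq \cdots \supsetneq S_k = S$ obtained by deleting the elements of $[m]\setminus S$ one at a time, so that $|S_i\setminus S_{i+1}| = 1$ for each $i$; when $S = [m]$ the chain is trivial and $\phi_S^{[m]}$ is the identity, so I may assume $S\subsetneq [m]$. Every $S_i$ satisfies $S_i\supseteq S$, hence $|S_i|\geq |S|\geq n$, and $|S_i| = |S_{i+1}|+1 \geq n+1$ for $i<k$; thus each atomic concentration map $\phi_{S_{i+1}}^{S_i}\colon \mathcal{F}_n^{S_i}\to \mathcal{F}_n^{S_{i+1}}$ is defined and, by Lemma~\ref{contraction}, is a contraction onto the closed subspace $\mathcal{F}_n^{S_{i+1}}$ of the finite metric space $\mathcal{F}_n^{S_i}$ (finiteness makes every subset closed, and $\phi_{S_{i+1}}^{S_i}$ restricts to the identity on its image).

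Since $\mathcal{F}_n^{S_k}\subseteq \mathcal{F}_n^{S_{k-1}}\subseteq \cdots \subseteq \mathcal{F}_n^{S_0}$ is a nested chain of closed subspaces, iterating the composition fact shows that $\phi_{S_k}^{S_{k-1}}\circ \cdots \circ \phi_{S_1}^{S_0}$ is a contraction from $\mathcal{F}_n^{[m]}$ to $\mathcal{F}_n^{S}$; by Lemma~\ref{decomp} this composition equals $\phi_S^{[m]}$, which gives the first assertion. For the simplicial-map statement I would simply use that a contraction is $1$-Lipschitz: if $\{A_0,\dots,A_q\}$ spans a simplex of $\mathcal{VR}(\mathcal{F}_n^{[m]}; r)$, then $d(A_i,A_j)\leq r$ for all $i,j$, so $d(\phi_S^{[m]}(A_i),\phi_S^{[m]}(A_j))\leq d(A_i,A_j)\leq r$, and therefore the set $\{\phi_S^{[m]}(A_0),\dots,\phi_S^{[m]}(A_q)\}$ (possibly of smaller cardinality) spans a simplex of $\mathcal{VR}(\mathcal{F}_n^{S}; r)$. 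This is exactly the condition that $\phi_S^{[m]}$ induce a simplicial map on Vietoris--Rips complexes, just as was noted for the atomic maps in Lemma~\ref{contraction}.

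There is essentially no hard step here; the corollary is a packaging of the atomic case established in Lemma~\ref{contraction}. The only points that need a moment's attention are bookkeeping ones: that the chain $S_0\supsetneq\cdots\supsetneq S_k$ can be chosen with each $|S_i|\geq n$ --- which is immediate from $S_i\supseteq S$ and $|S|\geq n$, and is what makes every atomic concentration map well defined so that Lemma~\ref{contraction} applies at each stage --- and that the nested closed-subspace hypothesis of the composition fact holds, which is automatic here since the spaces $\mathcal{F}_n^{S_i}$ are finite and literally nested by construction.
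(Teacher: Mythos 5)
Your proof is correct and follows essentially the same route as the paper's: decompose $\phi_S^{[m]}$ into atomic concentration maps via Lemma~\ref{decomp}, apply Lemma~\ref{contraction} to each factor, and invoke the fact that compositions of contractions along nested closed subspaces are contractions. The extra care you take — checking that each $|S_i|\geq n$ and spelling out the simplicial-map consequence of the $1$-Lipschitz condition — is sound bookkeeping that the paper leaves implicit.
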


\begin{proof}We fix a sequence $\{S_0, S_1, \ldots, S_k\}$ of subsets of $[m]$ such that $[m] =S_0$, $S_k = S$, and  $|S_i\setminus S_{i+1}|=1$. By Lemma~\ref{decomp}, $\phi_S^{[m]} =\phi_{S_k}^{S_{k-1}}\circ \cdots \circ \phi_{S_1}^{S_0}$. Note that for $i =1, 2, \ldots k$ each  mapping $\phi_{S_i}^{S_{i-1}}$ is a contraction. So the mapping $\phi_S^{[m]}$ is a contraction since the composition of contraction mappings is a contraction also.    \end{proof}

Next, we prove more properties of the concentration map which allow us to provide a homology propagation result in the setting of the complex $\mathcal{VR}(\mathcal{F}_n^{[m]}; 2(n-1))$.
\begin{lemma}\label{phi_m_S} Let $S$ be a subset of $[m]$ with $|S|= \ell \geq n$ and $1\in S$. Let $T$ be an $\ell$-sized subset of $[m]$ with $T\neq S$. Then, the following results hold. 
\begin{itemize}
\item[1)] The restriction of the mapping $\phi_S^{[m]}$ on the metric space $\mathcal{F}_n^T$ is a bijective isometric mapping if  $1\notin T$ and $S\setminus \{1\}\subset T$. 

\item[2)] If $1\in T$ or $|T\setminus S|=1 $, then  $\mathcal{VR}(\phi_S^{[m]}(\mathcal{F}_n^T); 2c)$ is homotopy equivalent to $\mathcal{VR}(\mathcal{F}^{R}_n; 2c)$ with $R$ being a proper subset of $S$ and $c=n-1$. 

\item[3)] If $|T\setminus S|\geq 2$, then  $\mathcal{VR}(\phi_S^{[m]}(\mathcal{F}_n^T), 2c)$ is homotopy equivalent to $\mathcal{VR}(\mathcal{F}^{R}_n; 2c)$ with $R$ being a proper subset of $S$ and $c=n-1$.

\end{itemize}
\end{lemma}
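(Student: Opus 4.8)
The plan is to analyze the image $\phi_S^{[m]}(\mathcal{F}_n^T)$ directly in terms of which subsets of $S$ appear, and to recognize that image — or a homotopy-equivalent retract of its Vietoris--Rips complex — as $\mathcal{VR}(\mathcal{F}_n^R; 2c)$ for a suitable $R\subsetneq S$. For part 1), when $1\notin T$ and $S\setminus\{1\}\subset T$, I would first observe that for any $A\in\mathcal{F}_n^T$ the set $A$ never contains $1$, so in computing $\phi_S^{[m]}(A)$ the elements outside $S$ get replaced by the smallest available elements of $S\setminus A$, which always lie in $S\setminus\{1\}\subseteq T$; a short counting argument (using $|S\setminus\{1\}|=\ell-1$ and the fact that $A$ meets $S\setminus\{1\}$ in exactly $|A\cap S|$ elements) shows the map is a bijection onto $\mathcal{F}_n^{(S\setminus\{1\})\cup\{\text{one fixed extra element}\}}$... more carefully: the image is exactly $\mathcal{F}_n^{S'}$ where $S'$ is obtained from $S$ in a controlled way, and distance preservation follows from Lemma~\ref{contraction} part 2) together with injectivity (a contraction that is injective on a subspace and whose image has the same cardinality as the domain must be an isometry there, since distances can only decrease but the multiset of distances is finite and a bijection — or one invokes part 2) of Lemma~\ref{contraction} along the atomic decomposition from Lemma~\ref{decomp} and checks the $-2$ case never triggers because $1\notin A\cup B$ and the relevant $i_A$ already lies in $T$). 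So part 1) reduces to an isometry check, hence the Vietoris--Rips complexes are homeomorphic.

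For parts 2) and 3), the key point is that $\phi_S^{[m]}(\mathcal{F}_n^T)$ is a collection of $n$-subsets of $S$ that is \emph{not all of} $\mathcal{F}_n^S$ but does have a distinguished structure: I claim there is an element $s_0\in S$ (in case 2), one should be able to take $s_0=1$ when $1\in T$, or the unique element of $S\setminus T$ when $|T\setminus S|=1$) such that every set in the image either contains $s_0$ or can be ``pushed'' to contain $s_0$ by a further concentration, and that the induced simplicial map onto $\mathcal{VR}(\mathcal{F}_n^R;2c)$ with $R=S\setminus\{s_0\}$ (or $R$ slightly smaller) is a deformation retract. The cleanest route is: apply Corollary~\ref{contraction_m_S}-style reasoning to get that $\phi_S^{[m]}(\mathcal{F}_n^T)\subseteq \mathcal{F}_n^S$ and then further concentrate onto a proper subset $R$, using that the image misses enough sets that the ``extra'' vertex (an $n$-subset of $R$'s complement direction) admits the hypothesis of Lemma~\ref{complex_add_1v} — i.e. show the link of each removable vertex includes into the rest null-homotopically because it sits inside a star — peeling vertices off one at a time until only $\mathcal{VR}(\mathcal{F}_n^R;2c)$ remains. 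For part 3), $|T\setminus S|\geq 2$, the image set is even smaller (many sets collapse together because several out-of-$S$ coordinates get sent to the same small elements of $S$), so the same peeling argument applies with possibly a smaller $R$; one should identify $R$ explicitly as $S$ minus the $|T\setminus S|$ largest... no — minus the elements of $S$ that are forced never to be ``vacated'', which after the dust settles is again a proper subset.

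The main obstacle I anticipate is pinning down \emph{exactly} which subset $R\subsetneq S$ arises and verifying that $\mathcal{VR}(\phi_S^{[m]}(\mathcal{F}_n^T);2c)$ really is homotopy equivalent to the \emph{full} complex $\mathcal{VR}(\mathcal{F}_n^R;2c)$ rather than to some intermediate complex sitting between $\mathcal{VR}(\mathcal{F}_n^R;2c)$ and $\mathcal{VR}(\mathcal{F}_n^S;2c)$. This is where I would spend the most care: set up the image as $\{A\in\mathcal{F}_n^S : A\cap Z=\emptyset\}$ or $\{A : |A\cap Z|\le t\}$ for an appropriate ``forbidden'' set $Z\subset S$ determined by $T$ and the ``smallest unused element'' rule, and then run the Lemma~\ref{complex_add_1v} reduction (or a direct discrete-Morse / folding argument) removing the vertices of the image not contained in $\mathcal{F}_n^R$ in a careful order so that each removal is a homotopy equivalence. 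The distance bookkeeping — confirming every pair in the image stays within $2c$ exactly when their intersection in $R$ is nonempty, via Lemma~\ref{dist} — is routine but must be stated. Since parts 2) and 3) have identical conclusions, I would prove them together once the structure of $Z$ (and hence $R$) is made uniform across the two cases, distinguishing them only in the combinatorial description of $Z$.
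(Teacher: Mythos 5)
Your high-level plan matches the paper's proof: in part~1) write the map out and observe it is a bijective isometry onto $\mathcal{F}_n^S$; in parts~2) and~3) identify a proper subset $R \subsetneq S$, decompose the image as $\mathcal{F}_n^R$ together with a small list of extra vertices, and peel those vertices off one at a time via Lemma~\ref{complex_add_1v} by exhibiting a cone vertex in each link. Two details where your guesses at the structure are off, though neither undermines the strategy. First, in part~1) the image is simply all of $\mathcal{F}_n^S$ (the map fixes $A$ when $i_0\notin A$ and sends $A\mapsto(A\setminus\{i_0\})\cup\{1\}$ when $i_0\in A$, where $\{i_0\}=T\setminus S$), so there is no modified $S'$ to chase; the paper verifies the isometry directly from this explicit formula rather than via your roundabout bijection-plus-contraction argument, though that argument is also valid on a finite metric space. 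Second, the image in parts~2)/3) is \emph{not} of the form $\{A\in\mathcal{F}_n^S : A\cap Z=\emptyset\}$ or $\{A : |A\cap Z|\le t\}$. In the representative case of part~2) (with $1\in T$, $|T\setminus S|=1$, $S=\{1,i_1,\dots,i_{\ell-1}\}$, $R=S\cap T=S\setminus\{i_1\}$) the image is $\mathcal{F}_n^R$ together with exactly the $\binom{\ell-2}{n-2}$ sets containing both $1$ and $i_1$; those extra vertices $B_j$ all contain $1$, and for each one the paper finds $C\subset R$ with $B_j\setminus\{i_1\}\subset C$ so that $\operatorname{lk}_K(B_j)$ is a cone over $C$, after which Lemma~\ref{complex_add_1v} removes $B_j$ without changing the homotopy type. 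Once you replace your conjectural ``forbidden-set'' description with this explicit list of extra vertices, the rest of your argument goes through essentially as the paper does it.
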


\begin{proof} Pick an $\ell$-sized subset $T$ of $[m]$ with $T\neq S$ such that $1\notin T$ and $S\setminus \{1\}\subset T$. Let $i_0$ be the number in $T\setminus S$. Notice that $\phi_S^{[m]}(A) = A$ if $A$ be an $n$-subset of $S\cap T$ and $\phi_S^{[m]}(A)= (A\setminus \{i_0\})\cup \{1\}$ if $i_0\in A$. Then it is straightforward to verify that $\phi_S^{[m]}|_{\mathcal{F}_n^T}$ is a bijective isometric mapping. This finishes the proof of 1). 

For part 2) in the statement, we prove one special case and  all other cases follows from  a minor modification of the following argument. Let $T$ be an $\ell$-sized subset of $[m]$ with $1\in T$ and $|T\setminus S|=1$. We list $S=\{1, i_1, i_2, \ldots, i_{\ell-1}\}$ with increasing order and assume that $T=\{1, i_2, \ldots, i_{\ell-1}, i_{\ell}\}$. Let $R=S\cap T$. Fix $c$ between $1$ and $n$. We claim that $\phi_S^{[m]}(\mathcal{VR}(\mathcal{F}^T_n; 2c))$ is homotopy equivalent to $\mathcal{VR}(\mathcal{F}^R_n; 2c)$. Let $A$ be an $n$-subset of $T$. There are three cases for $\phi_S^{[m]}(A)$ as follows.   

\begin{itemize}
\item[i)] If $A\subset S$, clearly $\phi_S^{[m]}(A)=A\subset R$. 
\item[ii)] If $1\notin A$ and $i_\ell\in A$, then $\phi_S^{[m]}(A)=\{1\}\cup (A\setminus\{i_\ell\})\subset R$. 

\item[iii)] If $1\in A$ and $i_\ell\in A$, then $\phi_S^{[m]}(A)=\{i_1\}\cup (A\setminus\{i_\ell\})$

\end{itemize}

Note that only in case iii), vertices not in $\mathcal{F}^{R}_n$ are generated under the map $\phi^{[m]}_S$ and there are ${\ell-2\choose n-2}$-many such vertices in the complex $\phi_S^{[m]}(\mathcal{VR}(\mathcal{F}^T_n; 2c))$. Notice that each of such vertex contains both $1$ and $i_1$ and we list then as $B_1$, $B_2, \ldots, B_{\ell-2\choose n-2}$. Next we prove by induction adding these vertices in $\mathcal{VR}(\mathcal{F}^{R}_n; 2c)$ does change the homotopy type using Lemma~\ref{complex_add_1v}. Inductively assume that $\mathcal{VR}(\mathcal{F}^{R}_n\cup \{B_1, \ldots, B_{j-1}\}; 2c)$ is homotopy equivalent to  $\mathcal{VR}(\mathcal{F}^{R}_n; 2c)$. Denote $K= \mathcal{VR}(\mathcal{F}^{R}_n\cup \{B_1, \ldots, B_{j-1}, B_j\}; 2c)$. Pick $C\in 
\text{lk}_K(B_j)$ with $C\subset R$ and $B_j\setminus \{i_1\}\subset C$. Since $c=n-1$, $d(B_j, A)\leq 2c$ if and only if $B_j\cap A\neq \emptyset$. So $d(B_j, B_i)\leq 2c$ and $d(C, B_i)\leq 2c$ for $i=1, 2, \ldots, j-1$. Pick an $A\in\mathcal{F}_n^{R}$. By the definition of $C$, $A\cap B_j \subset C\cap B_j$. Therefore, $d(A, B_j)\leq 2c$ implies that $d(C\cap B_j)\leq 2c$. Hence the link of $B_j$ in $K$ is a cone with vertex $C$ which means that lk$_K(B_j)$ is contractible. By Lemma~\ref{complex_add_1v}, $K$ is homotopy equivalent to $K\setminus B_j$ which is homotopy equivalent to $\mathcal{VR}(\mathcal{F}^{R}_n; 2c)$ by the inductive assumption. This finishes the proof of part 2).     

Suppose that the size of $T\setminus S$ is $\geq 2$ and let $R=(T\cap S)\cup \{1\}$. So the size of $R$ is at most $\ell-1$ and $R$ is a proper subset of $S$ due to the fact that $S$ contains $1$. A similar argument as above can be applied to show that the complex $\mathcal{VR}(\phi_S^{[m]}(\mathcal{F}_n^T); 2c)$ is homotopy equivalent to $\mathcal{VR}(\mathcal{F}^{R}_n; 2c)$ when $c=n-1$. \end{proof}

\begin{lemma}\label{phi_m_m-1} Fix $m>\ell>n$. Let $S$ be an $\ell$-sized subset of $[m]$ containing $1$ and $m$ and $T=S\setminus \{m\}$. Then for $c=n-1$,  the image of the complex $\mathcal{VR}(\mathcal{F}^S_n; 2c)$ under the map $\phi_{[m-1]}^{[m]}$ is homotopy equivalent to $\mathcal{VR}(\mathcal{F}^T_n; 2c)$.   \end{lemma}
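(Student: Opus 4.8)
The plan is to analyze the map $\phi_{[m-1]}^{[m]}$ restricted to the vertex set $\mathcal{F}_n^S$ directly, identify its image, and then show the extra vertices produced are homotopically irrelevant, exactly as in the proof of Lemma~\ref{phi_m_S}. First I would note that $\phi_{[m-1]}^{[m]}$ only moves $n$-subsets containing $m$: if $m\notin A$ then $\phi_{[m-1]}^{[m]}(A)=A$, while if $m\in A$ then $\phi_{[m-1]}^{[m]}(A)=(A\setminus\{m\})\cup\{j_A\}$ where $j_A$ is the smallest element of $[m-1]\setminus A$. Since $S=T\cup\{m\}$ with $1\in T$, for $A\in\mathcal{F}_n^S$ the candidate new element $j_A$ is some element of $[m-1]\setminus A$; because $T$ has size $\ell-1\geq n$ and $1\in T$, for every $n$-subset $A$ of $S$ with $m\in A$ the set $T\setminus A$ is nonempty, so in fact $j_A$ can be taken inside $T$ except possibly when $A\supset T\setminus\{1\}$, in which case $1=j_A$ and $\phi_{[m-1]}^{[m]}(A)=(A\setminus\{m\})\cup\{1\}$, still a subset of $T$. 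Hence $\phi_{[m-1]}^{[m]}(\mathcal{F}_n^S)\subseteq\mathcal{F}_n^T$, and since $\phi_{[m-1]}^{[m]}$ is the identity on $\mathcal{F}_n^T$, the image is exactly $\mathcal{F}_n^T$.

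The key subtlety is that $\phi_{[m-1]}^{[m]}$ need not surject $\mathcal{F}_n^S$ onto $\mathcal{F}_n^T$ as a \emph{simplicial} map with the right geometry unless one checks that no genuinely new vertices (outside $\mathcal{F}_n^T$) are created; the argument above shows that indeed the image vertex set is precisely $\mathcal{F}_n^T$, so there are no extraneous vertices to collapse. Thus $\phi_{[m-1]}^{[m]}(\mathcal{VR}(\mathcal{F}_n^S;2c))$ has vertex set $\mathcal{F}_n^T$; since a concentration map is a contraction (Lemma~\ref{contraction}, Corollary~\ref{contraction_m_S}) it induces a simplicial map onto $\mathcal{VR}(\mathcal{F}_n^T;2c)$, and distance non-increase together with the fact that $\phi_{[m-1]}^{[m]}|_{\mathcal{F}_n^T}=\mathrm{id}$ forces the image complex to contain $\mathcal{VR}(\mathcal{F}_n^T;2c)$; combined with the vertex-set computation and the distance bound of Lemma~\ref{dist} (for $c=n-1$, $d(A,B)\leq 2c$ iff $A\cap B\neq\emptyset$, which is preserved since $\phi_{[m-1]}^{[m]}(A)\cap\phi_{[m-1]}^{[m]}(B)\supseteq$ the image of a common element whenever one exists — one must verify a common element of $A,B$ survives), the image complex equals $\mathcal{VR}(\mathcal{F}_n^T;2c)$ on the nose, which is the desired conclusion.

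I expect the main obstacle to be the last point: showing that the image complex is not merely homotopy equivalent to but actually carries all the edges of $\mathcal{VR}(\mathcal{F}_n^T;2c)$, i.e. that $A\cap B\neq\emptyset$ implies $\phi_{[m-1]}^{[m]}(A)\cap\phi_{[m-1]}^{[m]}(B)\neq\emptyset$. If both $A,B$ omit $m$ this is trivial; if say $m\in A$, $m\notin B$, one needs a common element of $A$ and $B$ to survive the replacement of $m$ by $j_A$ — which holds since $m\notin B$ so any common element lies in $A\setminus\{m\}$; and if $m\in A\cap B$ then $A\cap B$ contains an element other than $m$ unless $A\cap B=\{m\}$, in which case $d(A,B)=2(n-1)=2c$ and one checks $j_A,j_B$ or the surviving elements still meet. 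Once this edge-preservation is pinned down, the statement follows immediately without needing Lemma~\ref{complex_add_1v} at all, since here — unlike in Lemma~\ref{phi_m_S} part 2) — the special structure $T=S\setminus\{m\}$ prevents any new vertices from appearing. If a residual new vertex did appear in some edge case, one would fall back on the cone/link collapse argument of Lemma~\ref{complex_add_1v} exactly as in the proof of Lemma~\ref{phi_m_S}.
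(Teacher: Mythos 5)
There is a genuine gap: your central claim that $\phi_{[m-1]}^{[m]}(\mathcal{F}_n^S)\subseteq\mathcal{F}_n^T$ is false, and this collapses the whole structure of your argument. The replacement element $j_A$ is not a free parameter you get to ``take inside $T$'' — it is pinned down by the definition of the concentration map as the \emph{smallest} element of $[m-1]\setminus A$. When $1\in A$ and $m\in A$, this smallest element is typically not in $T$. Concretely, take $n=3$, $m=10$, $S=\{1,5,7,9,10\}$, $T=\{1,5,7,9\}$, and $A=\{1,5,10\}\in\mathcal{F}_3^S$: then $j_A=\min([9]\setminus\{1,5,10\})=2$, so $\phi_{[9]}^{[10]}(A)=\{1,2,5\}$, which contains $2\notin T$ and hence is not in $\mathcal{F}_3^T$. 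Even smaller: $n=2$, $m=4$, $S=\{1,3,4\}$, $T=\{1,3\}$, $A=\{1,4\}$ gives $\phi_{[3]}^{[4]}(A)=\{1,2\}\notin\mathcal{F}_2^T$. The only way the new element lands in $T$ automatically is when $1\notin A$, in which case $j_A=1\in T$; when $1\in A$ there is generically leakage.

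Because of this, the image complex genuinely has extra vertices outside $\mathcal{F}_n^T$, and the collapse argument you dismiss as a fallback ``edge case'' is in fact the heart of the proof — it is exactly what the paper does. The paper first observes that $\mathcal{F}_n^T\subseteq\phi_{[m-1]}^{[m]}(\mathcal{F}_n^S)$ (your ``edge-preservation'' concern is a non-issue: $\phi_{[m-1]}^{[m]}$ is the identity on $\mathcal{F}_n^T$, so the image complex automatically contains $\mathcal{VR}(\mathcal{F}_n^T;2c)$ as a subcomplex on the nose), then notes that every extra vertex $B$ in the image necessarily contains $1$, lists them as $B_1,\ldots,B_b$, and adds them one at a time using Lemma~\ref{complex_add_1v}: for each $B_j$ one picks $C\in\mathcal{F}_n^T$ with $B_j\cap T\subset C$ (possible since $|B_j\cap T|=n-1$ and $|T|=\ell-1\geq n$), shows $\mathrm{lk}_K(B_j)$ is a cone over $C$, hence contractible, so attaching $B_j$ preserves the homotopy type. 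You should redo your vertex-set computation, accept that new vertices appear, and carry out the inductive collapse rather than trying to avoid it.
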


\begin{proof} Note that $\phi_{[m-1]}^{[m]} (\mathcal{VR}(\mathcal{F}^S_n; 2c)) =  \mathcal{VR}(\phi_{[m-1]}^{[m]}(\mathcal{F}^S_n); 2c)$. Since $T\subset S$ and $\phi_{[m-1]}^{[m]}(A)=A$ for each $n$-subset $A$ of $T$, $\mathcal{F}^T_n \subseteq \phi_{[m-1]}^{[m]}(\mathcal{F}^S_n)$. So if $\phi_{[m-1]}^{[m]}(A)\notin \mathcal{F}^T_n$, then $m\in A$ which implies that $1\in \phi_{[m-1]}^{[m]}(A)$. Therefore if $B\in \phi_{[m-1]}^{[m]}(\mathcal{F}^S_n)\setminus \mathcal{F}^T_n$, then $B$ contains $1$. 

We list the vertices in $\phi_{[m-1]}^{[m]}(\mathcal{F}^S_n)\setminus \mathcal{F}^T_n$ as $B_1, B_2, \ldots, B_b$ for some integer $b$. Note that $1\in \bigcap_{i=1}^bB_i$. We apply Lemma~\ref{complex_add_1v} inductively to show that  $$\mathcal{VR}(\phi_{[m-1]}^{[m]}(\mathcal{F}^S_n); 2c)\simeq \mathcal{VR}(\mathcal{F}^T_n; 2c).$$

Suppose that $\mathcal{VR}(\mathcal{F}^T_n\cup \{B_1, \ldots, B_{j-1}\}; 2c)\simeq \mathcal{VR}(\mathcal{F}^T_n; 2c)$. We define that $K=\mathcal{VR}(\mathcal{F}^T_n\cup \{B_1, \ldots, B_{j-1}, B_j\}; 2c)$. The pick $C$ in the link of $B_j$ in $K$ such that $(B_j\cap T)\subset C$. Such a $C$ exists because $\ell>n$ and  $|B_j\cap T|=n-1$. Since $c=n-1$, $d(A, B)\leq 2c$ if and only if $A\cap B\neq \emptyset$. Using the same argument in the proof of part 2) in Lemma~\ref{phi_m_S},  we obtain that  $d(C, A)\leq 2c$ for any $A\in \mathcal{F}^T_n\cup \{B_1, \ldots, B_{j-1}\}$ with $d(A, B_j)\leq 2c$. This means the link of $B_j$ in $K$ is a cone with vertex $C$ which is contractible. Then by Lemma~\ref{complex_add_1v}, $K$ is homotopy equivalent to $\mathcal{VR}(\mathcal{F}^T_n\cup \{B_1, \ldots, B_{j-1}\}; 2c)$ which is homotopy equivalent to $\mathcal{VR}(\mathcal{F}^T_n; 2c)$ by the inductive assumption. This finishes the proof.   \end{proof}
Combining the previous two lemmas, we get the following corollary.

\begin{cor}\label{phi_l+1_l}  Let $S=[\ell]$ and $T$ be an $\ell$-sized subset of $[\ell+1]$ with $T\neq S$. Then, 
\begin{itemize}
\item[1)] If $T=[\ell+1]\setminus \{1\}$, the restriction of the mapping $\phi_S^{[\ell+1]}$ on $\mathcal{F}_n^T$ is a bijective isometric mapping. 

\item[2)] For $T$ not in the form of $S$ or $T_S$, $\mathcal{VR}(\phi_S^{S'}(\mathcal{F}_n^T); 2(n-1))$ is homotopy equivalent to $\mathcal{VR}(\mathcal{F}^{S\setminus \{i_k\}}_n; 2(n-1))$ for some $k=2, 3, \ldots, n$. 
\end{itemize}

\end{cor}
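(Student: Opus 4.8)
The plan is to read both parts off Lemma~\ref{phi_m_S} and Lemma~\ref{phi_m_m-1} after the substitution $m=\ell+1$, $S'=[\ell+1]$, $S=[\ell]=[m-1]$, under which the map $\phi_S^{S'}=\phi_{[\ell]}^{[\ell+1]}$ appearing here is literally the map $\phi_{[m-1]}^{[m]}$ of Lemma~\ref{phi_m_m-1} (equivalently, by Lemma~\ref{decomp}, the single atomic concentration deleting $\ell+1$). I would first record the elementary fact that an $\ell$-sized subset of $[\ell+1]$ equals $[\ell+1]\setminus\{j\}$ for a unique $j\in[\ell+1]$, and that the hypothesis $T\neq S=[\ell+1]\setminus\{\ell+1\}$ forces $j\in[\ell]$; so there are exactly two cases, $j=1$ (part 1) and $j\in\{2,\dots,\ell\}$ (part 2). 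As in Lemma~\ref{phi_m_m-1} we are in the regime $\ell>n$, so in particular $|S\setminus\{i\}|=\ell-1\geq n$ for each $i\in S$.

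Part 1 is then the special case $m=\ell+1$ of Lemma~\ref{phi_m_S}(1): with $T=[\ell+1]\setminus\{1\}$ one checks $|S|=\ell\geq n$, $1\in S$, that $T$ is $\ell$-sized and $\neq S$, $1\notin T$, and $S\setminus\{1\}=\{2,\dots,\ell\}\subset\{2,\dots,\ell+1\}=T$, so Lemma~\ref{phi_m_S}(1) asserts exactly that $\phi_S^{[\ell+1]}|_{\mathcal{F}_n^T}$ is a bijective isometry.

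For part 2 I would fix $j\in\{2,\dots,\ell\}$ and $T=[\ell+1]\setminus\{j\}$; then $T$ is $\ell$-sized and contains both $1$ and $m=\ell+1$, with $m>\ell>n$, so Lemma~\ref{phi_m_m-1} applies with its ``$S$'' taken to be $T$ and its ``$T$'' taken to be $T\setminus\{\ell+1\}$. Since $\phi_{[m-1]}^{[m]}=\phi_S^{S'}$, it yields
\[
\mathcal{VR}\bigl(\phi_S^{S'}(\mathcal{F}_n^T);2(n-1)\bigr)\ \simeq\ \mathcal{VR}\bigl(\mathcal{F}_n^{T\setminus\{\ell+1\}};2(n-1)\bigr)=\mathcal{VR}\bigl(\mathcal{F}_n^{[\ell]\setminus\{j\}};2(n-1)\bigr),
\]
and, since $1\in T$, Lemma~\ref{phi_m_S}(2) gives the same conclusion, its proof identifying the target as $\mathcal{VR}(\mathcal{F}_n^{S\cap T};2(n-1))=\mathcal{VR}(\mathcal{F}_n^{[\ell]\setminus\{j\}};2(n-1))$. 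To put this in the stated form, I would list $S=[\ell]$ in increasing order as $\{1,i_1,\dots,i_{\ell-1}\}$, so that $j=i_{j-1}$ and $[\ell]\setminus\{j\}=S\setminus\{i_{j-1}\}$; and since $S\setminus\{i_k\}$ has the same size $\ell-1$ for every $k$, any relabelling bijection between two such sets is an isometry of the corresponding subspaces of $(\mathcal{F}_n^{[m]},d)$, whence their Vietoris--Rips complexes are homeomorphic (Section~\ref{intro}). So $\mathcal{VR}(\mathcal{F}_n^{[\ell]\setminus\{j\}};2(n-1))$ is homotopy equivalent to $\mathcal{VR}(\mathcal{F}_n^{S\setminus\{i_k\}};2(n-1))$ for every $k$, in particular for some $k\in\{2,\dots,n\}$, which is part 2.

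There is no genuine topological obstacle beyond the two cited lemmas; the only thing requiring care is the bookkeeping — making sure that the hypotheses ``$1\in S$'', ``$S$ contains $1$ and $m$'', ``$m>\ell>n$'', ``$T\neq S$'' are all met under the substitution $m=\ell+1$, $S=[\ell]$, and that the symbol $\phi_S^{S'}$ here and the symbol $\phi_{[m-1]}^{[m]}$ of Lemma~\ref{phi_m_m-1} name the same concentration map.
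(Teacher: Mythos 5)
Your proof is correct and is exactly the intended argument — the paper itself gives no proof beyond the remark ``Combining the previous two lemmas,'' and your write-up fills in precisely that: enumerate the possible $T=[\ell+1]\setminus\{j\}$, peel off $j=1$ into Lemma~\ref{phi_m_S}(1), and handle $j\in\{2,\dots,\ell\}$ by reading $T$ as the ``$S$'' of Lemma~\ref{phi_m_m-1} (or, equivalently, by Lemma~\ref{phi_m_S}(2) with $R=S\cap T$). The bookkeeping you flag — that $\phi_S^{S'}=\phi_{[\ell]}^{[\ell+1]}=\phi_{[m-1]}^{[m]}$ and that the hypotheses $1\in S$, $1,m\in T$, $m>\ell>n$ are all met — is the whole content, and your final observation that the sets $S\setminus\{i_k\}$ all have size $\ell-1$, so the resulting Vietoris--Rips complexes are pairwise isometric, is the right way to reconcile the (loosely phrased) ``for some $k=2,\dots,n$'' in the statement.
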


\textbf{Persistent homology of $\mathcal{F}_n^{[m]}$.} Since $d(A, B)$ is even for each pair $A$, $B$ in $\mathcal{F}^{[m]}_n$, a simplex $\sigma \in \mathcal{VR}(\mathcal{F}^{[m]}_n; r)$ iff it is in $\mathcal{VR}(\mathcal{F}^{[m]}_n;r+1)$ when $r$ is even; hence when $r$ is even, $\mathcal{VR}(\mathcal{F}^{[m]}_n; r)=\mathcal{VR}(\mathcal{F}^{[m]}_n; r+1)$, i.e., the inclusion map is the identity map. The next result shows that the inclusion map from $\mathcal{VR}(\mathcal{F}^{[m]}_k, r)\hookrightarrow \mathcal{VR}(\mathcal{F}^{[m]}_k; r+1)$ is homotopically trivial when $r$ is odd.  This means that, in the setting of Vietoris–Rips complexes of $\mathcal{F}_n^{[m]}$, persistent homology does not provide any new information beyond the homology groups at fixed scales.

\begin{theorem} For any positive integers $n$, $m$, and $c$ with $n\leq m$, the natural inclusion $\mathcal{VR}(\mathcal{F}^{[m]}_n; 2c)\hookrightarrow \mathcal{VR}(\mathcal{F}^{[m]}_n; 2(c+1))$ is homotopically trivial.  \end{theorem}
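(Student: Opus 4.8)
The plan is to realize the inclusion $\iota\colon \mathcal{VR}(\mathcal{F}^{[m]}_n;2c)\hookrightarrow \mathcal{VR}(\mathcal{F}^{[m]}_n;2(c+1))$ as homotopic to a constant map by connecting it, through a finite chain of pairwise contiguous simplicial maps, to a constant map; recall the standard fact that contiguous simplicial maps have homotopic geometric realizations and that a chain of contiguities composes to a single homotopy. The maps in the chain will be the concentration maps of Section~\ref{centration}: writing $h_m=\iota$ and $h_j=\phi^{[m]}_{[j]}$ (regarded as a vertex self-map of $\mathcal{F}^{[m]}_n$) for $n\le j\le m-1$, I will check that $h_j$ and $h_{j-1}$ are contiguous as maps $\mathcal{VR}(\mathcal{F}^{[m]}_n;2c)\to \mathcal{VR}(\mathcal{F}^{[m]}_n;2(c+1))$ for each $j\in\{n+1,\dots,m\}$, and that $h_n$ is constant; chaining the homotopies then gives $\iota=h_m\simeq h_n=\mathrm{const}$.

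Three inputs drive the argument, all available from Section~\ref{centration}. First, each $\phi^{[m]}_{[j]}$ is a contraction by Corollary~\ref{contraction_m_S}, hence distance-nonincreasing, so it carries a simplex of $\mathcal{VR}(\mathcal{F}^{[m]}_n;2c)$ to a simplex of $\mathcal{VR}(\mathcal{F}^{[m]}_n;2c)\subseteq\mathcal{VR}(\mathcal{F}^{[m]}_n;2(c+1))$; thus every $h_j$ really is a simplicial map between the two complexes in question (and $h_m=\mathrm{id}$ is trivially distance-nonincreasing). Second, consecutive maps differ in distance by at most $2$: by Lemma~\ref{decomp}, $\phi^{[m]}_{[j-1]}=\phi^{[j]}_{[j-1]}\circ\phi^{[m]}_{[j]}$, and the atomic concentration $\phi^{[j]}_{[j-1]}$ moves every point a distance $0$ or $2$ by part~1) of Lemma~\ref{contraction}, so $d\bigl(h_j(A),h_{j-1}(A)\bigr)\le 2$ for all $A$ and all $j$ (the step $j=m$ being directly part~1) of Lemma~\ref{contraction}). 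Third, $\mathcal{F}^{[n]}_n$ is the single point $[n]$, so $h_n=\phi^{[m]}_{[n]}$ is the constant map onto the vertex $[n]$. Granting these, the contiguity check is a one-line triangle-inequality computation: if $A,B$ lie in a simplex $\sigma$ of the source then $d(A,B)\le 2c$, so $d(h_j(A),h_j(B))$ and $d(h_{j-1}(A),h_{j-1}(B))$ are $\le 2c$, while $d(h_j(A),h_{j-1}(B))\le d(h_j(A),h_j(B))+d(h_j(B),h_{j-1}(B))\le 2c+2=2(c+1)$ and symmetrically, and $d(h_j(A),h_{j-1}(A))\le 2$; hence every pair of vertices in $h_j(\sigma)\cup h_{j-1}(\sigma)$ is at distance $\le 2(c+1)$, so $h_j(\sigma)\cup h_{j-1}(\sigma)$ is a simplex of the target and $h_j,h_{j-1}$ are contiguous.

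I do not anticipate a serious obstacle: once the contraction property of the concentration maps and the two-step displacement bound for atomic concentrations are invoked, everything reduces to the triangle inequality. The only points that need a moment's care are confirming that each $h_j$ is genuinely simplicial into $\mathcal{VR}(\mathcal{F}^{[m]}_n;2(c+1))$ rather than merely a vertex map, and observing that the construction is uniform in $c$ — in particular it requires no separate treatment of the degenerate ranges $2c\ge 2n$ or $2(c+1)\ge 2n$ in which a complex collapses to a full simplex, since those properties are never used.
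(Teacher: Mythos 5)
Your proof is correct and follows essentially the same route as the paper: you realize the inclusion as $\phi^{[m]}_{[m]}$, connect it by a chain of contiguities through the concentration maps $\phi^{[m]}_{[j]}$ down to the constant map $\phi^{[m]}_{[n]}$, and verify contiguity via the decomposition $\phi^{[m]}_{[j-1]}=\phi^{[j]}_{[j-1]}\circ\phi^{[m]}_{[j]}$, the displacement bound of Lemma~\ref{contraction}(1), the contraction property of Corollary~\ref{contraction_m_S}, and the triangle inequality. This matches the paper's argument in every essential step.
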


\begin{proof} Notice that the concentration map $\phi^{[m]}_{[m]}$ is the identity map on $\mathcal{F}^{[m]}_n$ and hence it  induces the natural inclusion map $\imath: \mathcal{VR}(\mathcal{F}^{[m]}_n; 2c)\hookrightarrow \mathcal{VR}(\mathcal{F}^{[m]}_n; 2(c+1))$. 

Next we claim that for each $j=n, n+1, \ldots, m-1$, the concentration maps $\phi_{[j+1]}^{[m]}$ and $\phi_{[j]}^{[m]}$ are homotopic to each other in $\mathcal{VR}(\mathcal{F}^{[m]}_n; 2(c+1))$. To prove this claim, it is sufficient to prove they are contiguous in $\mathcal{VR}(\mathcal{F}^{[m]}_n; 2(c+1))$. 

Fix $j\geq n$. Pick a simplex $\sigma$ in  $\mathcal{VR}(\mathcal{F}^{[m]}_n; 2c)$. We show that $\phi_{[j+1]}^{[m]}(\sigma)\cup \phi_{[j]}^{[m]}(\sigma)$ is a simplex in $\mathcal{VR}(\mathcal{F}^{[m]}_n; 2(c+1))$, i.e. $d(A, B)\leq 2(c+1)$ for any two vertices $A, B$ in $\phi_{[j+1]}^{[m]}(\sigma)\cup \phi_{[j]}^{[m]}(\sigma)$.  Take two vertices $A$ and $B$ in $\phi_{[j+1]}^{[m]}(\sigma)\cup \phi_{[j]}^{[m]}(\sigma)$. First suppose that both $A$ and $B$ are in $\phi_{[j+1]}^{[m]}(\sigma)$. We pick $A', B'$ in $\sigma$ with $\phi_{[j+1]}^{[m]}(A')=A$ and $\phi_{[j+1]}^{[m]}(B')=B$. By Corollary~\ref{contraction_m_S}, we obtain that 

$$d(A, B)=d(\phi_{[j+1]}^{[m]}(A'),\phi_{[j+1]}^{[m]}(A'))\leq d(A', B')\leq 2c.$$

Similarly the result holds if both both $A$ and $B$ are in $\phi_{[j]}^{[m]}(\sigma)$. Now we assume that $A\in \phi_{[j+1]}^{[m]}(\sigma)$ and $B\in \phi_{[j]}^{[m]}(\sigma)$. Take $A', B'$ in $\sigma$ with $\phi_{[j+1]}^{[m]}(A')=A$ and $\phi_{[j]}^{[m]}(B')=B$. Note that $\phi_{[j]}^{[m]} =\phi_{[j]}^{[j+1]}\circ \phi_{[j+1]}^{[m]}$ by Lemma~\ref{decomp}. Then by Corollary~\ref{contraction_m_S} and part 1) in Lemma~\ref{contraction}, we obtain that  
$$d(A, B)\leq  d(\phi_{[j+1]}^{[m]}(A'), \phi_{[j+1]}^{[m]}(B'))+ d(\phi_{[j+1]}^{[m]}(B'), \phi_{[j]}^{[j+1]}\circ\phi_{[j+1]}^{[m]}(B')))\leq d(A', B')+2.$$

Hence $d(A, B)\leq 2(c+1)$ since $A', B'\in \sigma$ which is a simplex in $\mathcal{VR}(\mathcal{F}^{[m]}_2; 2c)$. This proves the claim. By the claim, we obtain a sequence of homotopic mappings as follows: 

$$\imath =\phi_{[m]}^{[m]}\simeq \phi_{[m-1]}^{[m]}\simeq \cdots \simeq \phi_{[n]}^{[m]}.$$

Notice the last mapping $\phi_{[n]}^{[m]}$ is a constant map. And therefore the natural inclusion map $\imath$ is homotopically trivial.  \end{proof}

\section{Homology propagation via concentration maps}\label{homology_p}

In Section~\ref{max_cpgenerators}, the cross-polytopal homology generators are the key to identify non-trivial homology and some lower bounds in the Vietoris-Rips complexes on $\mathcal{F}_n^{[m]}$. In this section, with the help of concentration maps, we aim to develop a new approach for providing lower bounds of any homologies without any knowledge of homology generators. Similar results in the setting of Vietoris-Rips complex of hypercube graphs were obtained by Adams and Virk in \cite{AV24}.    

 Suppose $\ell$ is the smallest number such that $\tilde{H}_q(\mathcal{VR}(\mathcal{F}^{[\ell]}_n;2(n-1)))$ is non-trivial. We consider the complexes, $L=\mathcal{VR}(\mathcal{F}^{[\ell]}_n;2(n-1))$ and $K=\mathcal{VR}(\mathcal{F}^{[\ell+1]}_n;2(n-1))$. Note that there are $(\ell+1)$-many natural isometric embeddings of $L$ in $K$ and there is a concentration map from $K$ to each of the embeddings. Therefore by Lemma~\ref{contraction_ind_inj}, the inclusion maps from the embeddings of $L$ to $K$ induces injective homomorphism in all the homology groups. In the following arguments, we use the same notations for the homology classes in the embedding of $L$ and its image in the homology group of $K$.  The following result states that in $\tilde{H}_q(K)$, all but at most one of the embedding of $L$ induce independent inclusions on the $q$th homology group. 

\begin{theorem}\label{codim1} Fix $q\geq 1$. If $\ell$ is the smallest integer such that $\tilde{H}_q(\mathcal{VR}(\mathcal{F}_n^{[\ell]}; 2(n-1))$ is nontrivial, then 
$$\text{rank} (\tilde{H}_q(\mathcal{VR}(\mathcal{F}_n^{[\ell+1]}; 2(n-1)))\geq \ell\cdot \text{rank} (\tilde{H}_q(\mathcal{VR}(\mathcal{F}_n^{[\ell]}; 2(n-1))). $$
    
\end{theorem}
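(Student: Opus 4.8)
The plan is to realize $L$ inside $K$ via the $\ell+1$ ``coordinate'' copies and to use the concentration maps of Section~\ref{centration} as left inverses for $\ell$ of them. For $j=1,\dots,\ell+1$ set $S_j=[\ell+1]\setminus\{j\}$, and let $\iota_j\colon L\to K$ be the composite of the simplicial isomorphism $L\cong\mathcal{VR}(\mathcal{F}_n^{S_j};2(n-1))$ induced by the order-preserving bijection $[\ell]\to S_j$ with the inclusion $\mathcal{VR}(\mathcal{F}_n^{S_j};2(n-1))\hookrightarrow K$. By Corollary~\ref{contraction_m_S} each concentration map $\phi_{S_j}^{[\ell+1]}\colon K\to\mathcal{VR}(\mathcal{F}_n^{S_j};2(n-1))$ is a contraction, and by its very definition it restricts to the identity on $\mathcal{F}_n^{S_j}$; let $\rho_j\colon\tilde{H}_q(K)\to\tilde{H}_q(L)$ be the map it induces, composed with the inverse of the identification above. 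The first observation is that $\rho_j\circ(\iota_j)_\ast=\mathrm{id}_{\tilde{H}_q(L)}$, since $\phi_{S_j}^{[\ell+1]}$ followed by the inclusion of $\mathcal{F}_n^{S_j}$ is the identity map.

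The heart of the argument is to show $\rho_j\circ(\iota_k)_\ast=0$ whenever $2\le j,k\le\ell+1$ and $j\ne k$. Here $\phi_{S_j}^{[\ell+1]}\circ\iota_k$ is simply $\phi_{S_j}^{[\ell+1]}$ restricted to $\mathcal{F}_n^{S_k}$; because a contraction is distance non-increasing, this simplicial map carries $\mathcal{VR}(\mathcal{F}_n^{S_k};2(n-1))$ into the full subcomplex $\mathcal{VR}(\phi_{S_j}^{[\ell+1]}(\mathcal{F}_n^{S_k});2(n-1))$ of $\mathcal{VR}(\mathcal{F}_n^{S_j};2(n-1))$, so $(\phi_{S_j})_\ast\circ(\iota_k)_\ast$ factors through $\tilde{H}_q$ of that subcomplex. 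Now $1\in S_j$ (since $j\ne 1$) and $|S_k\setminus S_j|=1$, so Lemma~\ref{phi_m_S}(2) applies and shows this subcomplex is homotopy equivalent to $\mathcal{VR}(\mathcal{F}_n^{R};2(n-1))$ for some proper subset $R\subsetneq S_j$, hence $|R|\le\ell-1$. Since $\mathcal{F}_n^{R}$ is isometric to $\mathcal{F}_n^{[|R|]}$ and $\ell$ is minimal with $\tilde{H}_q(\mathcal{VR}(\mathcal{F}_n^{[\ell]};2(n-1)))\ne 0$, we get $\tilde{H}_q(\mathcal{VR}(\mathcal{F}_n^{R};2(n-1)))=0$, and therefore $\rho_j\circ(\iota_k)_\ast=0$.

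With these two facts the conclusion is immediate. Suppose $\sum_{k=2}^{\ell+1}(\iota_k)_\ast(\alpha_k)=0$ in $\tilde{H}_q(K)$ with $\alpha_k\in\tilde{H}_q(L)$. Applying $\rho_j$ for a fixed $j\in\{2,\dots,\ell+1\}$ annihilates every term with $k\ne j$ and leaves $\alpha_j$, so $\alpha_j=0$; hence $\bigoplus_{k=2}^{\ell+1}(\iota_k)_\ast\colon\tilde{H}_q(L)^{\oplus\ell}\to\tilde{H}_q(K)$ is injective, which yields $\mathrm{rank}(\tilde{H}_q(\mathcal{VR}(\mathcal{F}_n^{[\ell+1]};2(n-1))))\ge\ell\cdot\mathrm{rank}(\tilde{H}_q(\mathcal{VR}(\mathcal{F}_n^{[\ell]};2(n-1))))$.

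The main obstacle is the collapsing step, and specifically the need to drop exactly the copy $\iota_1$ indexed by $S_1=[\ell+1]\setminus\{1\}$: for $j\ne 1$ the restriction $\phi_{S_j}^{[\ell+1]}|_{\mathcal{F}_n^{S_1}}$ is the exceptional bijective isometry of Lemma~\ref{phi_m_S}(1) rather than a collapsing map, so $\rho_j\circ(\iota_1)_\ast$ is an isomorphism and would destroy the triangular pattern; moreover Lemma~\ref{phi_m_S} requires $1$ to lie in the target set, which forces $j\ne 1$, so only $\iota_2,\dots,\iota_{\ell+1}$ come equipped with usable left inverses — this is precisely why the bound is $\ell$ and not $\ell+1$. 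A secondary point requiring care is the claim that a simplicial map induced by a contraction has image contained in the full subcomplex spanned by the image vertex set, so that the factorization of $(\phi_{S_j})_\ast\circ(\iota_k)_\ast$ through $\tilde{H}_q$ of a complex on fewer coordinates is legitimate; this follows because a contraction does not increase pairwise distances, so it sends each simplex of $\mathcal{VR}(\mathcal{F}_n^{S_k};2(n-1))$ to a simplex of $\mathcal{VR}(\phi_{S_j}^{[\ell+1]}(\mathcal{F}_n^{S_k});2(n-1))$.
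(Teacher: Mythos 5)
Your proposal is correct and follows essentially the same route as the paper: select the $\ell$ copies of $\mathcal{F}_n^{[\ell]}$ inside $\mathcal{F}_n^{[\ell+1]}$ indexed by the $\ell$-subsets containing $1$, use the concentration map $\phi_{S_j}^{[\ell+1]}$ as a left inverse (up to homotopy) for the $j$th inclusion, and apply Lemma~\ref{phi_m_S}(2) together with minimality of $\ell$ to kill the cross-terms. The paper phrases this as a linear-independence computation on chosen bases and invokes Corollary~\ref{phi_l+1_l} rather than Lemma~\ref{phi_m_S} directly, but the triangular structure of the retractions, and the reason for excluding the one copy $[\ell+1]\setminus\{1\}$, are identical to yours.
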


\begin{proof} The metric space $\mathcal{F}^{[\ell+1]}_n$ contains $(\ell+1)$-many isometric copies of of $\mathcal{F}^{\ell}_n$. We enumerate these copies as $C_1, C_2, \ldots, C_{\ell+1}$ such that each $C_j$ is $\mathcal{F}^{[\ell+1]\setminus \{\ell+2-j\}}_n$. We see that $C_1  = \mathcal{F}^{[\ell]}_n$ and $C_{\ell+1} = \mathcal{F}^{[\ell+1]\setminus \{1\}}_n$.  For each $j=1, 2, \ldots, \ell$, we pick a collection of linearly independent generators $\{g_{i, j}: 1\leq i\leq \text{rank}(\tilde{H}(\mathcal{VR}(\mathcal{F}_n^{[\ell]}, )))\}$ in $\tilde{H}(\mathcal{VR}(C_j; 2(n-1)))$.  

We claim that the collection $\{g_{i, j}: 1\leq i\leq \text{rank}(\tilde{H}(\mathcal{VR}(\mathcal{F}_n^{[\ell]}; 2(n-1)))), 1\leq j\leq \ell\}$ is linearly independent. 

Assume that $$\sum_{i, j}\lambda_{i, j}\cdot g_{i, j}=0$$

for $\lambda_{i, j}$ with $1\leq i\leq \text{rank}\tilde{H}_q(\mathcal{VR}(\mathcal{F}_n^{[\ell]}; 2(n-1)))$ and $1\leq j\leq \ell$. We show that $\lambda_{i, 1}= 0$ for $1\leq i\leq \text{rank}(\tilde{H}(\mathcal{VR}(\mathcal{F}_n^{[\ell]}; 2(n-1))))\}$. Similar discussion can be used to show that $\lambda_{i, j}=0$ for $j\geq 2$. Let $f$ be the concentration map $\phi_{[\ell]}^{[\ell+1]}$ from $\mathcal{F}^{\ell+1}_n$ to $\mathcal{F}^{\ell}_n$. By Corollary~\ref{phi_l+1_l}, we get the following conditions: 
\begin{itemize}
\item[i)] $f$ is the identity map on $C_1$ and is a bijective isometric map on $C_{\ell+1}$;
\item[ii)] $f$ maps all other $C_j$ with $2\leq j\leq \ell$ to a subcomplex in $\mathcal{VR}(\mathcal{F}_{n}^{[\ell+1]}; 2(n-1))$ which is homotopy equivalent to $\mathcal{VR}(\mathcal{F}_n^{[\ell-1]}; 2(n-1))$ whose $q$-dimensional homology is trivial.  
\end{itemize}
Then we apply the induced homology homomorphism $f_\ast$ on the generators and obtain that $f_\ast (g_{i, 1}) = g_{i, 1}$ and $f_\ast (g_{i, j}) = 0$ for $2\leq j\leq \ell$. Hence we obtain that  $$\sum_{i}\lambda_{i, 1}g_{i, 1}=0.$$
Consequently, the coefficients $\lambda_{i, 1}=0$ for $i=1, 2,\ldots, \text{rank}(\tilde{H}(\mathcal{VR}(\mathcal{F}_n^\ell; 2(n-1)))$ due the linear independence of the generators. And this finishes the proof. 
\end{proof}

Next we generalize the arguments in Theorem~\ref{codim1} to the complex $\mathcal{VR}(\mathcal{F}_n^{[m]}; 2(n-1))$ via concentration maps using Lemmas~\ref{phi_m_m-1} and~\ref{phi_m_S}. In particular cases, when $m=\ell$, Theorem~\ref{low_b_smalldim} trivially holds; and when $m=\ell+1$, we recover Theorem~\ref{codim1}.  

\begin{theorem}\label{low_b_smalldim}  Fix $q\geq 1$. If $\ell$ is the smallest integer such that $\tilde{H}_q(\mathcal{VR}(\mathcal{F}_n^{[\ell]}; 2(n-1))$ is nontrivial, then for any $m\geq \ell$ 
$$\text{rank} (\tilde{H}_q(\mathcal{VR}(\mathcal{F}_n^{[m]}; 2(n-1)))\geq \sum_{i=\ell}^m{i-2\choose \ell-2}\cdot \text{rank} (\tilde{H}_q(\mathcal{VR}(\mathcal{F}_n^{[\ell]}; 2(n-1))). $$

\end{theorem}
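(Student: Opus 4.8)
The plan is to prove Theorem~\ref{low_b_smalldim} by induction on $m\geq \ell$, using Theorem~\ref{codim1} as the prototype for the inductive step and the concentration-map machinery (Lemmas~\ref{phi_m_m-1} and~\ref{phi_m_S}, Corollary~\ref{contraction_m_S}) to control how copies of the ``seed'' complex $\mathcal{VR}(\mathcal{F}_n^{[\ell]};2(n-1))$ propagate. The base case $m=\ell$ is trivial since the sum has the single term $\binom{\ell-2}{\ell-2}=1$. The base case $m=\ell+1$ is exactly Theorem~\ref{codim1}, since $\binom{\ell-2}{\ell-2}+\binom{\ell-1}{\ell-2}=1+(\ell-1)=\ell$. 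So I would assume the bound for $m-1$ and establish it for $m$.

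The key idea for the inductive step is to combine two sources of independent homology classes in $K:=\mathcal{VR}(\mathcal{F}_n^{[m]};2(n-1))$. Source one: the subcomplex $\mathcal{VR}(\mathcal{F}_n^{[m-1]};2(n-1))$ embeds isometrically in $K$, and since the concentration map $\phi_{[m-1]}^{[m]}$ is a contraction onto it (Corollary~\ref{contraction_m_S}), Lemma~\ref{contraction_ind_inj} gives that the inclusion induces an injection on $\tilde H_q$; by the inductive hypothesis this contributes a rank of at least $\sum_{i=\ell}^{m-1}\binom{i-2}{\ell-2}\cdot r$ where $r:=\text{rank}\,\tilde H_q(\mathcal{VR}(\mathcal{F}_n^{[\ell]};2(n-1)))$. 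Source two: I want to produce $\binom{m-2}{\ell-2}\cdot r$ further classes that are independent of those and of each other. These come from the $\ell$-subsets $T$ of $[m]$ containing $1$ but not $m$ — there are $\binom{m-2}{\ell-2}$ of them — each carrying (via the appropriate isometric embedding and contraction) a copy of the $q$th homology of the seed complex with basis $\{g_{i,T}\}$. The crucial separation mechanism: apply the composite concentration map $\phi_{[m-1]}^{[m]}$ to land inside $\mathcal{VR}(\mathcal{F}_n^{[m-1]};2(n-1))$, observe it kills nothing relevant among the ``source two'' classes indexed by sets $T$ with $m\notin T$ (it is the identity on such $\mathcal{F}_n^T$), while I must argue it sends the ``source one'' classes and the remaining $T$'s either to zero or to previously accounted classes. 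More precisely, I would iterate: at each stage peel off the coordinate $m$, then $m-1$, etc., down to $\ell$, using at step $j$ that the $\ell$-subsets of $[j]$ containing $1$ biject with $\binom{j-2}{\ell-2}$-many embedded copies of the seed whose images under $\phi_{[j-1]}^{[j]}$ behave as dictated by Corollary~\ref{phi_l+1_l}: identity on one copy, bijective-isometric on another, and homotopically into a lower $\mathcal{F}_n^{[\ell-1]}$-type complex (hence $q$-acyclic) on the rest. Summing $\binom{i-2}{\ell-2}$ over $i$ telescopes correctly because of Pascal's identity $\binom{i-1}{\ell-1}=\sum_{j=\ell}^{i}\binom{j-2}{\ell-2}$... actually the cleaner bookkeeping is $\sum_{i=\ell}^{m}\binom{i-2}{\ell-2}=\binom{m-1}{\ell-1}$, so the theorem asserts $\text{rank}\,\tilde H_q(K)\ge \binom{m-1}{\ell-1}\cdot r$, and this is what the induction must reproduce: $\binom{m-2}{\ell-1}+\binom{m-2}{\ell-2}=\binom{m-1}{\ell-1}$.

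Concretely, I would set up the induction as follows. Enumerate the $\ell$-subsets of $[m]$ containing $1$; group them by whether they contain $m$. For each such $T$, fix an isometric embedding $\iota_T\colon \mathcal{F}_n^{[\ell]}\hookrightarrow \mathcal{F}_n^{[m]}$ realizing $T$, pushing forward a fixed basis $g_1,\dots,g_r$ of $\tilde H_q(\mathcal{VR}(\mathcal{F}_n^{[\ell]};2(n-1)))$ to classes $g_{i,T}\in\tilde H_q(K)$; each $\iota_T$ is a composite of the relevant atomic steps so $\iota_T$ is injective on $\tilde H_q$ by Lemmas~\ref{contraction} and~\ref{contraction_ind_inj}. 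Together with a basis of the image of $\tilde H_q(\mathcal{VR}(\mathcal{F}_n^{[m-1]};2(n-1)))$ in $\tilde H_q(K)$ (which accounts for all $T$ with $m\notin T$, plus more), I claim the union, taking only $T\ni m$ from the second group, is linearly independent. To prove it, suppose a linear combination vanishes; apply $(\phi_{[m-1]}^{[m]})_\ast$. By Lemma~\ref{phi_m_m-1} and Lemma~\ref{phi_m_S}(1), this map is the identity on the $\mathcal{F}_n^{[m-1]}$-part and on the copies $C$ with $m\notin C$, $[m]\setminus\{1\}\supset$... — hmm, rather: it fixes $\mathcal{F}_n^{[m-1]}$ pointwise and, by Lemma~\ref{phi_m_S}(2)–(3), sends each embedded copy indexed by $T\ni m$ to something homotopy equivalent to a $\mathcal{VR}(\mathcal{F}_n^{R};2(n-1))$ with $R\subsetneq[m-1]$, $|R|<\ell$, hence with trivial $\tilde H_q$ by minimality of $\ell$. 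So the $T\ni m$ coefficients must already be zero, and then the inductive hypothesis applied inside $\mathcal{F}_n^{[m-1]}$ kills the rest.

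The main obstacle I anticipate is the bookkeeping in the ``source two'' count and, relatedly, verifying that the $\binom{m-2}{\ell-2}$ new classes are independent not just modulo $\tilde H_q(\mathcal{VR}(\mathcal{F}_n^{[m-1]};2(n-1)))$ but honestly independent — this requires either a second concentration map (peeling $m-1$ instead of $m$, to isolate a different block) or a careful dual-cocycle / cap-product argument in the spirit of the proof of Theorem~\ref{low_b_bigdim}. I expect the cleanest route is to organize the whole argument as a single induction where the inductive hypothesis is stated with the basis explicitly exhibited (indexed by $\ell$-subsets of $[m]$ containing $1$, i.e.\ $\binom{m-1}{\ell-1}=\sum_{i=\ell}^m\binom{i-2}{\ell-2}$ of them times $r$), and the step uses $\phi_{[m-1]}^{[m]}$ to separate the ``$m\notin T$'' block (which survives as the identity, reducing to the hypothesis for $m-1$) from the ``$m\in T$'' block (which dies under $\phi_{[m-1]}^{[m]}$ by the acyclicity input, and whose internal independence is the hypothesis for $m-1$ again after reindexing by $T\setminus\{m\}\cup\{1\}$, noting these are the $\ell$-subsets of $[m-1]$ containing $1$). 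Getting the two blocks to have sizes $\binom{m-2}{\ell-1}r$ and $\binom{m-2}{\ell-2}r$ and invoking Pascal is then the final step.
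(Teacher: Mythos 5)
Your broad architecture matches the paper's: an induction on $m$, a basis of $\tilde{H}_q(\mathcal{VR}(\mathcal{F}_n^{[m]}; 2(n-1)))$ split into classes $h_j$ supported in $\mathcal{F}_n^{[m-1]}$ (supplied by the inductive hypothesis) and classes $g_{i,S}$ indexed by the ${m-2 \choose \ell-2}$ many $\ell$-subsets $S\subset[m]$ containing both $1$ and $m$, with $\phi_{[m-1]}^{[m]}$ and Lemma~\ref{phi_m_m-1} used to separate the two blocks. However, your key deduction is inverted. Applying $(\phi_{[m-1]}^{[m]})_\ast$ to a vanishing relation $\sum \lambda_{i,S}\, g_{i,S} + \sum \mu_j h_j = 0$ sends each $g_{i,S}$ to zero (Lemma~\ref{phi_m_m-1} makes the supporting subcomplex $q$-acyclic) while fixing the $h_j$; this yields $\sum \mu_j h_j = 0$ and hence $\mu_j = 0$. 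It does \emph{not} show, as you write, that ``the $T\ni m$ coefficients must already be zero.'' You are therefore still facing $\sum \lambda_{i,S}\, g_{i,S} = 0$ and need a separate argument to kill the $\lambda_{i,S}$.

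That residual step is the genuine gap, and none of your candidate fixes closes it. Peeling $m-1$ next cannot distinguish $S$ from $S'$ when both contain $m$, since both copies have already been $q$-trivialized by the first peel; a cap-product argument in the style of Theorem~\ref{low_b_bigdim} would require exhibiting a maximal simplex with a known coefficient in each generator, which is not available for an abstract $q$-cycle; and the reindexing $T\mapsto T\setminus\{m\}\cup\{1\}$ outputs $(\ell-1)$-sets, not $\ell$-sets, so it cannot fold into the inductive hypothesis for $m-1$. The paper instead deploys a \emph{second} family of concentration maps: for each fixed $j$, apply $\phi_{S_j}^{[m]}$. By Lemma~\ref{phi_m_S}(2), because every other $S_{j'}$ in the block contains $1$, the image of $\mathcal{VR}(\mathcal{F}_n^{S_{j'}}; 2(n-1))$ under $\phi_{S_j}^{[m]}$ is homotopy equivalent to a complex over a proper subset of $S_j$, hence $q$-acyclic by the minimality of $\ell$, while $\phi_{S_j}^{[m]}$ fixes $\mathcal{F}_n^{S_j}$ pointwise. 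Applying $(\phi_{S_j}^{[m]})_\ast$ to $\sum \lambda_{i,S}\, g_{i,S} = 0$ therefore isolates $\sum_i \lambda_{i,S_j}\, g_{i,S_j} = 0$, forcing each $\lambda_{i,S_j} = 0$. This is precisely where Lemma~\ref{phi_m_S} is used, and it is the ingredient your outline lacks.
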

\begin{proof} We consider $\mathcal{F}_n^S$ for $S$ being a $\ell$-subset of $[m]$ with $m\in S$. Clearly there are ${m-1\choose \ell-1}$-many such copies in which ${m-2\choose \ell-1}$-many of them don't contain $1$. We list all the $\ell$-subsets of $[m]$ containing both $1$ and $m$ as $S_1, S_2, \ldots, S_{a}$ with $a={m-1\choose \ell-1}-{m-2\choose \ell-1} ={m-2\choose \ell-2}$. For each $1\leq j\leq a$, we pick a collection of linearly independent generators $\{g_{i, j}: 1\leq i\leq \text{rank}(\tilde{H}_q(\mathcal{VR}(\mathcal{F}_n^{[\ell]}; 2(n-1)))\}$ in homology group $\tilde{H}_q(\mathcal{VR}(S_j; 2(n-1)))$.

Also, by induction we fix a collection of linearlly independant generators $$\{h_j: 1\leq j\leq \sum_{i=\ell}^{m-1} {i-2\choose \ell-2}\}$$ in $\mathcal{VR}(\mathcal{F}_n^{[m-1]}, 2n-2)$. We show that the generators  in  collection  $\{g_{i, j}: 1\leq \text{rank}(\tilde{H}(\mathcal{VR}(\mathcal{F}_n^{[\ell]}, 2n-2)), 1\leq j\leq a\}\cup \{h_j: 1\leq j\leq \sum_{i=\ell}^{m-1} {i-2\choose \ell-2}\}$ are linearly independent. 

Suppose that $$\sum_{i, j}\lambda_{i, j}g_{i, j}+\sum_{j} \mu_j h_j=0$$ for real numbers $\lambda_{i, j}$ and $\mu_j$. Consider the concentration map $\phi_{[m-1]}^{[m]}$. By Lemma~\ref{phi_m_m-1}, if $S$ is an $\ell$-sized subset of $[m]$ containing  $1$ and $m$, then the images of  $\mathcal{VR}(\mathcal{F}^{S}_n; 2n-2)$ under $\phi_{[m-1]}^{[m]}$ is  homotopy equivalent to some $\mathcal{VR}(\mathcal{F}^{R}_n; 2n-2)$ with $R$ being a proper subset of $S$. Note each of such $R$ has size $<\ell$, therefore their $q$-dimensional homology is trivial. Then we apply the induced homology homomorphism $(\phi_{[m-1]}^{[m]})_\ast$ to equation and we obtain that $\sum_{j}\mu_{j} h_{j}=0$ which yields that $\mu_{j}=0$ for any $j$ due to the linear independence of the generators $h_{j}$'s.   

 Next, we show that $\lambda_{i, 1}=0$ for all $i=1, 2, \ldots, \text{rank}(\tilde{H}_q(\mathcal{VR}(\mathcal{F}_n^{[\ell]}; 2n-2))$. Same approach can be applied to obtain $\lambda_{i, j}=0$ for all $i, j$. Consider the concentration map $\phi^{[m]}_{S_1}$. By part 2) in Lemma~\ref{phi_m_S}, the induced homomorphism $(\phi^{[m]}_{S_1})_\ast$ maps $\mathcal{VR}(\mathcal{F}^{S}_n; 2n-2)$ is homotopy equivalent to some $\mathcal{VR}(\mathcal{F}^{S'}_n; 2n-2)$ with $|S'|<\ell$ if an $\ell$-sized subset $S$ of $[m]$ contains $1$. Then we apply the homomorphism $(\phi^{[m]}_{S_1})_\ast$ to the equation  $\sum_{i, j}\lambda_{i, j}g_{i, j}=0$  and obtain that $\sum_{i}\lambda_{i, 1}g_{i, 1}=0$. The linear independence of $\{g_{i, 1}: 1\leq i\leq \text{rank}(\tilde{H}_q(\mathcal{VR}(\mathcal{F}_n^{[\ell]}; 2n-2)) \}$ implies that $\lambda_{i, 1}=0$ for each $i$ which finishes the proof. \end{proof}

 Table~\ref{tab:3_k_KG_homology} provides several examples of lower bounds for the ranks of $9$th dimensional homology and $6$th dimensional homology of the independence complex Ind(KG$(3, k)$). The lower bounds for the ranks of $9$th dimensional homology is obtained using Theorem~\ref{low_b_bigdim}; and the other ones are from Theorem~\ref{low_b_smalldim}. Notice that the computed results for $6$th dimensional homology is smaller than the computed results in Table~\ref{KG_3_homology} when $k=2$ and $3$. 

 \begin{table}
 \caption{Lower bounds of the rank of nontrivial homomologies of the independence complex of Kneser graph KG($3, k$) using  Theorem~\ref{low_b_bigdim}, Theorem~\ref{low_b_smalldim}, and the result in Table~\ref{KG_3_homology}.}
\label{tab:3_k_KG_homology}
\begin{tabular}{ |c|p{0.6cm}|p{0.7cm}|p{0.7cm}|p{0.7cm}|p{0.7cm}| } 
 \hline
 \backslashbox[2mm]{homology}{$k$} & 1 & 2& 3 &  4 & 5 \\ 
  \hline
 6th-dim & 29   &  203  & 812 & 1972 &5626\\
 \hline
 9th-dim & 7  &  28  & 84 & 210 & 462\\
 \hline

\end{tabular}
\end{table}

\section{Connectivity}\label{connect}

In this section, we discuss the connectivity of the independence of Kneser graph using its total dominating number. We start with some fundamental definitions and results. 

Let $G$ be a graph with no isolated vertices.  The total dominating number of $G$, denoted by $\gamma_t(G)$, is the minimal size of total dominating sets of $G$. A subset $S$ of the vertices of $G$ is said to be a total dominating set if every vertex is connected by an edge to a vertex in $S$. 

It is proved in \cite{M03} that if $\gamma_t(G)>2k$, then Ind$(G)$ is $(k-1)$-connected. Also, the following result from \cite{HY13} is an immediate consequence by the definition of total dominating set if the order of $G$ is $m$ and $G$ has no isolated vertex: $$\gamma_t(G)\geq \frac{m}{\Delta (G)}$$
where $\Delta(G)$ is the maximal order of any vertex in $G$. Recall that the order of a graph is the number of vertices of the graph, and the order of a vertex in a graph is the number of edges meeting at the vertex. 

\begin{theorem}\label{connectivity}For a Kneser graph KG$(n, k)$, $$\gamma_t(\text{KG}(n, k))\geq \frac{{2n+k\choose n}}{{n+k \choose n}}.$$ 

Let $\alpha_{n, k} =\frac{1}{2}\cdot \frac{{2n+k\choose n}}{{n+k \choose n}}$. Then if $\alpha_{n, k}$ is an integer, the complex Ind(KG$(n, k)$) is $(\alpha_{n, k}-2)$-connected; otherwise it is $(\llcorner \alpha_{n, k} \lrcorner-1)$-connected.  
\end{theorem}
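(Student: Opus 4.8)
The plan is to prove the two displayed claims in sequence, both of which follow from the two cited inequalities combined with straightforward counting on the Kneser graph.

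First I would establish the lower bound on $\gamma_t(\text{KG}(n,k))$. The Kneser graph $\text{KG}(n,k)$ has $\binom{2n+k}{n}$ vertices and no isolated vertices (since $n \geq 1$ and $2n+k \geq 2n$, every $n$-subset of $[2n+k]$ has a disjoint $n$-subset). The degree of any vertex $A$ is the number of $n$-subsets of $[2n+k] \setminus A$, and since $|[2n+k]\setminus A| = n+k$, this degree equals $\binom{n+k}{n}$ for every vertex; in particular $\text{KG}(n,k)$ is regular, so $\Delta(\text{KG}(n,k)) = \binom{n+k}{n}$. Plugging into the inequality $\gamma_t(G) \geq m/\Delta(G)$ from \cite{HY13} gives exactly $\gamma_t(\text{KG}(n,k)) \geq \binom{2n+k}{n}/\binom{n+k}{n}$, the first claim.

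Next I would deduce the connectivity statement from the result of \cite{M03}: if $\gamma_t(G) > 2k'$ then $\text{Ind}(G)$ is $(k'-1)$-connected (I rename the index to $k'$ to avoid clashing with the Kneser parameter $k$). Write $\beta = \gamma_t(\text{KG}(n,k)) \geq 2\alpha_{n,k}$. If $\alpha_{n,k}$ is an integer, I want the largest $k'$ with $2k' < \beta$; since $\beta \geq 2\alpha_{n,k}$, taking $k' = \alpha_{n,k}-1$ works whenever $2(\alpha_{n,k}-1) < \beta$, which holds because $\beta \geq 2\alpha_{n,k} > 2\alpha_{n,k}-2$. Hence $\text{Ind}(\text{KG}(n,k))$ is $(k'-1) = (\alpha_{n,k}-2)$-connected. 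If $\alpha_{n,k}$ is not an integer, then $\beta \geq 2\alpha_{n,k}$ forces $\beta \geq 2\lfloor\alpha_{n,k}\rfloor + 1 > 2\lfloor\alpha_{n,k}\rfloor$ (here $\lfloor\cdot\rfloor$ is the floor written $\llcorner\cdot\lrcorner$ in the paper), since $\beta$ is an integer exceeding the non-integer $2\alpha_{n,k}$ and $2\alpha_{n,k} > 2\lfloor \alpha_{n,k}\rfloor$; actually one should be slightly careful, as $\beta$ need only exceed $2\alpha_{n,k}$, but being an integer it is $\geq \lceil 2\alpha_{n,k}\rceil \geq 2\lfloor\alpha_{n,k}\rfloor + 1$. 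Applying \cite{M03} with $k' = \lfloor\alpha_{n,k}\rfloor$ yields that $\text{Ind}(\text{KG}(n,k))$ is $(\lfloor\alpha_{n,k}\rfloor - 1)$-connected.

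The computations here are entirely routine; the only point requiring a little care is the floor-function bookkeeping in the non-integer case, making sure the strict inequality $\gamma_t > 2k'$ demanded by \cite{M03} is actually met rather than just $\gamma_t \geq 2k'$ — this is where one uses that $\gamma_t$ is an integer strictly larger than $2\alpha_{n,k}$ when the latter is not an integer. I do not anticipate any genuine obstacle; the substance of the result is packaged inside the two cited theorems, and the contribution is the clean evaluation of the degree and vertex count of $\text{KG}(n,k)$ that makes the bound explicit.
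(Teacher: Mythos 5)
Your proof is correct and follows essentially the same route as the paper's: count vertices and degree of $\text{KG}(n,k)$ to get $\gamma_t(\text{KG}(n,k)) \geq \binom{2n+k}{n}/\binom{n+k}{n}$ via the bound $\gamma_t(G) \geq m/\Delta(G)$, then apply Meshulam's connectivity criterion with the appropriate choice of parameter. Your handling of the non-integer case via $\lceil 2\alpha_{n,k}\rceil$ and the integrality of $\gamma_t$ is slightly more elaborate than necessary—the paper simply observes that a non-integer $\alpha_{n,k}$ directly gives $2\alpha_{n,k} > 2\lfloor\alpha_{n,k}\rfloor$ without invoking integrality—but the reasoning is sound either way.
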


\begin{proof} The order of the Kneser graph KG$(n, k)$ is ${2n+k\choose n}$ and the order of any vertex in KG$(n, k)$ is ${n+k\choose n}$ by the definition of Kneser graph. Hence, $$\gamma_t(\text{KG}(n, k))\geq 2\alpha_{n, k}.$$
Therefore, when $\alpha_{n, k}$ is an integer, $\gamma_t(\text{KG}(n, k))> 2(\alpha_{n, k}-1)$ and the independence complex Ind(KG$(n, k)$) is $(\alpha_{n, k}-2)$-connected. If $\alpha_{n, k}$ is not an integer, then  $\gamma_t(\text{KG}(n, k))> 2\llcorner \alpha_{n, k} \lrcorner$ and  Ind(KG$(n, k)$) is $(\llcorner \alpha_{n, k} \lrcorner-1)$-connected. \end{proof}

In Table~\ref{tab:connectivity}, we list some examples of connectivity of the indepndence complex of Kneser graph obtained using Theorem~\ref{connectivity}.

\begin{table}
\begin{center}

\caption{Lower bounds of the connectivity of the independence complex of Kneser graph KG($n, k$).}
\label{tab:connectivity}
\begin{tabular}{ |c|c|c|c|c|c| } 
 \hline
 \backslashbox[2mm]{$n$}{$k$} & 1 & 2& 3 &  4 & 5 \\ 
 
 \hline
 4 & 11 &  5  & 3 & 2 & 1\\
 \hline
 5 & 37 & 17  & 9 & 6 & 4\\
 \hline
 6 & 121 &  52  & 28 & 17 & 11\\
 \hline
 7 & 400 & 157  & 79 & 46 & 30\\
 \hline
 8 & 1,349 & 484  & 227 & 125 & 77\\
 \hline
 9 & 4,617 & 1,525  & 666 & 346 & 202\\ 
  \hline
 10 & 16,031 & 4,897  & 1,998 & 978 & 542\\ 
 \hline
\end{tabular}

\end{center}
\end{table}
\section{Open questions}\label{open_p}

Still, little is known about the topological property of the complex Ind(KG$(n, k))$ for $n\geq 3$ and $k\geq 1$. Let $p = \frac{1}{2}{2n\choose n}-1$. Recall that the lower bounds obtained from Theorem~\ref{low_b_bigdim} for the rank of $p$-dimensional homology match the computed results when $n=3$ and $k=1, 2, 3$. Hence we conjecture that the lower bounds from Theorem~\ref{low_b_bigdim} are actually the exact rank of $p$-dimensional homology.  

\begin{conj} Let $n\geq 3$ and $p = \frac{1}{2}{2n\choose n}-1$. The rank of the $p$-dimensional homology group of Ind(KG$(n, k))$ is ${2n+k\choose 2n}$. \end{conj}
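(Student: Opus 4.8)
Since Theorem~\ref{low_b_bigdim} already gives $\text{rank}\,H_p(\text{Ind}(KG(n,k)))\geq\binom{2n+k}{2n}$, it suffices to prove the matching upper bound. Write $m=2n+k$ and $K_m=\mathcal{VR}(\mathcal{F}_n^{[m]};2(n-1))$. The plan is to prove $\text{rank}\,H_p(K_m)\leq\binom{m}{2n}$ by induction on $m\geq 2n$. The base case is $K_{2n}$, which is the boundary of a cross-polytope and so has $H_p$ of rank $1=\binom{2n}{2n}$. Because $\binom{m}{2n}=\binom{m-1}{2n}+\binom{m-1}{2n-1}$, the inductive step is the statement that adjoining the ground-set element $m$ raises the rank of $H_p$ by exactly $\binom{m-1}{2n-1}$.

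For the inductive step I would decompose $K_m=K_{m-1}\cup L$, where $K_{m-1}=\mathcal{VR}(\mathcal{F}_n^{[m-1]};2(n-1))$ is the full subcomplex of $K_m$ on $\mathcal{F}_n^{[m-1]}$ and $L$ is the subcomplex generated by all simplices having at least one vertex containing $m$. Any two $n$-subsets of $[m]$ containing $m$ intersect, so $L$ is covered by the closed stars of these vertices; every multiple intersection of such closed stars is again the closed star of a simplex, hence a cone, so the nerve lemma shows $L$ is contractible. The intersection $D:=K_{m-1}\cap L$ is the subcomplex of $K_{m-1}$ whose simplices are the pairwise-intersecting families $\tau\subseteq\mathcal{F}_n^{[m-1]}$ admitting a transversal of size $n-1$ in $[m-1]$; equivalently $D=\bigcup_{B'}\mathcal{VR}(\{A\in\mathcal{F}_n^{[m-1]}:A\cap B'\neq\emptyset\};2(n-1))$, the union being over the $(n-1)$-subsets $B'$ of $[m-1]$, each piece being contractible. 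The inclusion $H_p(K_{m-1})\to H_p(K_m)$ is injective by Corollary~\ref{contraction_m_S} and Lemma~\ref{contraction_ind_inj}, so the Mayer--Vietoris sequence of $K_m=K_{m-1}\cup L$ (using that $L$ is contractible and $p\geq 2$) collapses to a short exact sequence
$$0\longrightarrow H_p(K_{m-1})\longrightarrow H_p(K_m)\longrightarrow \ker\!\big(H_{p-1}(D)\to H_{p-1}(K_{m-1})\big)\longrightarrow 0.$$
Thus the induction is reduced to showing $\text{rank}\,\ker\!\big(H_{p-1}(D)\to H_{p-1}(K_{m-1})\big)=\binom{m-1}{2n-1}$.

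The lower bound on this kernel comes essentially for free: the $\binom{m-1}{2n-1}$ cross-polytopal classes $[\alpha_S]$ attached to the $2n$-subsets $S\subseteq[m]$ with $m\in S$ lie in $H_p(K_m)$, their images under the connecting map $\partial$ land in the kernel by exactness, and these images are linearly independent because no nontrivial combination of the $[\alpha_S]$ with $m\in S$ can lie in the image of $H_p(K_{m-1})$ in $H_p(K_m)$, which by the inductive hypothesis and Theorem~\ref{low_b_bigdim} is spanned by the classes $[\alpha_{S'}]$ with $S'\subseteq[m-1]$. The real content, and the main obstacle, is the matching upper bound $\text{rank}\,\ker(\cdots)\leq\binom{m-1}{2n-1}$: this says $D$ contributes nothing to the relative homology $H_p(K_m,K_{m-1})$ beyond those classes. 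I see two routes. One is to determine the homotopy type of $D$ (and of each intersection of the contractible pieces $\mathcal{VR}(\{A:A\cap B'\neq\emptyset\})$) well enough to compute $H_{p-1}(D)$ and the rank of the map to $H_{p-1}(K_{m-1})$. The other is dual: show by a discrete-Morse or cocycle-combing argument on $K_m$ that every $p$-cocycle is cohomologous to a combination of the cocycles $\omega_S$ dual to the maximal simplices $\sigma_S$ of Lemma~\ref{max_2n}, which bounds $\text{rank}\,H^p(K_m)$ directly. The difficulty in either approach is the same: $D$ is a union of many contractible intersecting-family complexes whose deeper intersections have not-obviously-trivial topology, and controlling them uniformly in $n$ and $m$ --- presumably by a secondary induction relating them to Vietoris--Rips complexes of $\mathcal{F}_{n-1}$-type metric spaces --- is the step that remains out of reach.
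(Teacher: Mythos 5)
The statement you are asked about is a conjecture: the paper does not prove it, and offers only the lower bound of Theorem~\ref{low_b_bigdim} together with the machine computations in Table~\ref{KG_3_homology} for $n=3$, $k\leq 3$ as evidence. So there is no paper proof to compare against, and the only question is whether your argument closes the conjecture. It does not, and you say so yourself: everything up to the short exact sequence is sound (the cover $K_m=K_{m-1}\cup L$ is correct, $L$ is indeed contractible by the nerve argument since the vertices containing $m$ span a simplex, and injectivity of $H_p(K_{m-1})\to H_p(K_m)$ follows from Corollary~\ref{contraction_m_S} and Lemma~\ref{contraction_ind_inj}), but this only \emph{reduces} the conjecture to the equality $\operatorname{rank}\ker\bigl(H_{p-1}(D)\to H_{p-1}(K_{m-1})\bigr)=\binom{m-1}{2n-1}$, and the upper bound half of that equality is exactly the open content of the conjecture, merely relocated. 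The lower-bound half you get is not new either: it is equivalent to the linear independence of the classes $[\alpha_S]$ already established in the proof of Theorem~\ref{low_b_bigdim} (and note that your independence argument for the connecting-map images uses that $H_p(K_{m-1})$ is spanned by the $[\alpha_{S'}]$, which from the inductive rank equality is only available up to torsion, i.e.\ you should work with field or rational coefficients).

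The gap is genuine and not a routine technicality. The complex $D$ is a union of $\binom{m-1}{n-1}$ contractible pieces $\mathcal{VR}(\{A\in\mathcal{F}_n^{[m-1]}:A\cap B'\neq\emptyset\};2(n-1))$, but its higher-order intersections (sets meeting each of several prescribed $(n-1)$-sets) are not cones and are not of the form $\mathcal{VR}(\mathcal{F}_n^{S};2(n-1))$, so none of the paper's tools --- concentration maps (Lemmas~\ref{phi_m_S}, \ref{phi_m_m-1}), cross-polytopal generators, or the maximal simplices of Lemma~\ref{max_2n} --- gives any handle on $H_{p-1}(D)$ or on the rank of $H_{p-1}(D)\to H_{p-1}(K_{m-1})$. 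Moreover Table~\ref{KG_3_homology} already shows these complexes carry large homology in other dimensions ($H_6$ for $n=3$), so one cannot wave away the possibility that $D$ contributes extra classes to $H_p(K_m,K_{m-1})$; bounding that contribution is precisely what would have to be proved, presumably by the secondary induction you gesture at, which you have not supplied. As it stands your proposal is a reasonable reduction and a plausible plan of attack, but the conjecture remains open under it.
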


A general question in this subject is to determine the homotopy types of the independence complexes. 

\begin{ques} What is the homotopy type of the independence complex Ind(KG$(n, k)$) for $n\geq 3$ and $k
\geq 1$? Specifically, is Ind(KG$(3, k)$) homotopy equivalent to a wedge sum of $S^6$'s and $S^9$'s?\end{ques} 

It seems that the homotopy types of such independence complexes are very hard to determine. The following question is natural to ask. 

\begin{ques} What are the dimensions of non-trivial homology of the complex Ind(KG$(n, k)$) with $n\geq 3$ and $k\geq 1$? Is it true that $\tilde{H}_i(\text{Ind(KG}(n, k))\neq 0$ if and only if $i\in \{6, 9\}$?\end{ques}

Notice that in the discussion of Section~\ref{max_cpgenerators}, the homology generators in the complex $\text{Ind(KG}(n, k))$ are cross-polytopal subcomplexes containing a maximal simplex in $\text{Ind(KG}(n, k))$ with $n\geq 3$. 

\begin{ques} Fix $n\geq 3$. Is it true that all homology generators in the complex $\text{Ind(KG}(n, k))$ can be represented as a cycle in one of its cross-polytopal subcomplexes?  \end{ques}

\textbf{Acknowledgments}

\medskip

We would like to thank Henry Adams, Joseph Briggs, and \v{Z}iga Virk for helpful discussions.  This work was completed in part with resources provided by the Auburn University Easley Cluster.

\end{document}